\numberwithin{equation}{section}
\newcommand{\margnote}[1]{
\ifthenelse{\boolean{shownotes}}%
{\marginpar{\raggedright\tiny\texttt{#1}}}%
{}%
}
\newcommand{\hole}[1]{
\ifthenelse{\boolean{shownotes}}%
{\begin{center} \fbox{ \rule {.25cm}{0cm}
\rule[-.1cm]{0cm}{.4cm} \parbox{.85\textwidth}{\begin{center}
\texttt{#1}\end{center}} \rule {.25cm}{0cm}}\end{center}}
{}
}
\theoremstyle{plain}
\newtheorem{lemma}{Lemma}[section]
\newtheorem{theorem}[lemma]{Theorem}
\newtheorem{proposition}[lemma]{Proposition}
\theoremstyle{definition}
\newtheorem{remark}[lemma]{Remark}
\newtheorem{definition}[lemma]{Definition}
\theoremstyle{remark}
\newcommand{\R}{\mathbb{R}}
\newcommand{\cF}{{\mathcal{F}}}
\newcommand{\cT}{{\mathcal{T}}}
\newcommand{\cL}{{\mathcal{L}}}
\newcommand{\cR}{{\mathcal{R}}}
\newcommand{\cG}{{\mathcal{G}}}
\newcommand{\bv}{\boldsymbol{v}}
\renewcommand{\Re}{\mathrm{Re}\,}
\newcommand{\bpz}{\bar{p}_0}
\newcommand{\blam}{\bar{\lambda}}
\newcommand{\bw}{\boldsymbol{w}}
\newcommand{\bD}{\mathbb{D}}
\newcommand{\ep}{\epsilon}
\begin{document}

\title[Derivation of a bacterial nutrient-taxis cross-diffusion system]{Derivation of a bacterial 
nutrient-taxis system with doubly degenerate cross-diffusion as the 
parabolic limit of a velocity-jump process}

\author[R.G. Plaza]{Ram\'on G. Plaza}

\address{Instituto de 
Investigaciones en Matem\'aticas Aplicadas y en Sistemas\\Universidad Nacional Aut\'onoma de 
M\'exico\\Circuito Escolar s/n, C.P. 04510 Cd. de M\'exico (Mexico)}

\email{plaza@mym.iimas.unam.mx}

\begin{abstract}
This paper is devoted to the justification of the macroscopic, mean-field nutrient taxis system with 
doubly degenerate cross-diffusion proposed by Leyva \emph{et al.} \cite{LMP1} to model the 
complex spatio-temporal dynamics exhibited by the bacterium \emph{B. subtilis} during experiments run 
\emph{in vitro}. This justification is based on a microscopic description of the movement of individual 
cells whose changes in velocity (in both speed and orientation) obey a \emph{velocity jump process} 
\cite{ODA88}, governed by a transport equation of Boltzmann type. For that purpose, the asymptotic method 
introduced by Hillen and Othmer \cite{HiOt00,OtHi02} is applied, which consists of the computation of the 
leading order term in a regular Hilbert expansion for the solution to the transport equation, under an 
appropriate parabolic scaling and a first order perturbation of the turning rate of Schnitzer type 
\cite{Schn93}. The resulting parabolic limit equation at leading order for the bacterial cell density 
recovers the degenerate nonlinear cross diffusion term and the associated chemotactic drift appearing in the 
original system of equations. Although the bacterium \emph{B. subtilis} is used as a prototype, the method 
and results apply in more generality.
\end{abstract}

\keywords{chemotaxis, degenerate diffusion, velocity jump processes, transport equations}

\subjclass[2010]{35K65, 92C17, 60J75}

\maketitle

\setcounter{tocdepth}{1}

\section{Introduction}

\label{intro}

When grown \emph{in vitro}, many species of bacteria exhibit a variety of complex spatial patterns which 
depend upon environmental conditions such as the level of nutrients or the hardness of the substrate. 
Continuous mean-field systems of partial differential equations for the bacterial density and the nutrient 
concentration have been proposed to simulate these patterns with moderate to relative great success (see, for 
example, Murray \cite{MurI3ed}, the discussion and criticism by Ben-Jacob and collaborators 
\cite{B-JLev06,GKCB}, and the references therein). In a recent contribution, Leyva \emph{et al.} \cite{LMP1} 
introduced a system of reaction-diffusion and nutrient taxis system of equations to model the complex 
spatio-temporal dynamics exhibited by strains of the bacterium \emph{Bacillus subtilis} observed in 
experiments (cf. \cite{OMM}). The model reads
\begin{equation}
\label{fullsyst}
\begin{aligned}
u_t &=  \nabla \cdot \big( \sigma uv \nabla u \big) - \nabla \cdot \left( \sigma u^2 v \frac{\chi_0 K_d}{(K_d 
+ v)^2)} \nabla v \right) +  
\theta k uv, \\
v_t &= D_v \Delta v - k uv, 
\end{aligned}
\qquad \quad x \in \Omega, \; \; t >0,
\end{equation}
for scalar unknowns $u = u(x,t)$ and $v = v(x,t)$, where $\Omega \subset \R^n$, $n = 1,2$, is a bounded open 
domain, with piecewise regular boundary, and $\sigma, \theta, k, K_d, D_v, \chi_0 > 0$ are positive 
constants. In addition, $u$ and $v$ satisfy no-flux boundary conditions of the form
\begin{equation}
\label{bcs}
\begin{aligned}
\left(uv \nabla u - u^2 v \frac{\chi_0 K_d}{(K_d + v)^2)} \nabla v \right) \cdot \hat{\nu} &= 0, \\
\nabla v \cdot \hat{\nu} &= 0,
\end{aligned}
\qquad \quad x \in \partial \Omega, \; \; t >0,
\end{equation}
where $\hat{\nu} \in \R^n$, $|\hat{\nu}| = 1$, denotes the outer unit normal at each point of $\partial 
\Omega$. System \eqref{fullsyst} is further endowed with initial conditions,
\begin{equation}
\label{incond}
u(x,0) = u_0(x), \; \;  \;\;v (x,0) = v_0(x), \qquad x\in \Omega,
\end{equation}
where $u_0$ and $v_0$ are known functions. In this model, $u$ and $v$ represent the bacterial density and the 
nutrient concentration, respectively. Thus, $u_0$ and $v_0$ represent the initial spatial distribution of 
bacteria and nutrient. The constant $\sigma$ is related to the hardness of the substrate on the Petri dish 
(usually agar or a similar substance), $\theta$ is the conversion rate factor in the signal consumption 
mechanism, $K_d$ is the receptor-ligand dissociation constant, $k$ is the intrinsic consumption rate, and 
$D_v$ is the diffusion coefficient for the nutrient. Notice that the equation for the bacterial density 
contains a chemotactic term that models the response of the bacteria towards changes in the gradient of a 
chemical signal (in this case, originated by the nutrient). The constant $\chi_0 > 0$ measures the intensity 
of such signal.

One of the main features of system \eqref{fullsyst} is the appearance of a non-linear, degenerate, 
cross-diffusion coefficient in the bacterial density equation of the form
\begin{equation}
\label{diffcoeff}
D_u(u,v) = \sigma uv,
\end{equation}
which was originally proposed by Kawasaki \emph{et al.} \cite{KMMUS}. When grown 
on semi-solid agar plates with poor-to-medium nutrient levels, the \emph{B. subtilis} strain form densely 
branched, highly structured stable patterns (cf. Oghiwari \emph{et al.} \cite{OMM}). Experiments suggest that 
the bacterial colony envelope is a boundary layer of cells whose activity is high around the front, but 
immotile either inside of the envelope (where the nutrient is almost completely consumed) or in the outermost 
region (where the bacterial density is still low). Kawasaki and collaborators modelled this behaviour by 
allowing the diffusivity of the bacteria $D_u$ to be low when either the nutrient concentration or the 
bacterial density are low, and consequently proposed the form of the diffusion coefficient in 
\eqref{diffcoeff}. The original model by Kawasaki \emph{et al.} is a reaction-diffusion system with no 
transport terms due to taxis. 

The motivation of Leyva and collaborators to extend Kawasaki's model originated from the theoretical and 
experimental work by E. Ben-Jacob and his group \cite{B-JCoLev00,CCB,GKCB}, who argued in favour of the 
chemotactic response of bacteria as an essential ingredient to understand some of the fundamental features of 
the spatio-temporal patterns observed in experiments. Chemotactic terms in Keller-Segel type models 
\cite{KeSe1,KeSe2} usually appear in the equation for bacterial density as the divergence of a 
\emph{chemotactic flux}, $\boldsymbol{J}_c$, having the generic form,
\[
\boldsymbol{J}_c = - \zeta(u,v) \chi(v) \nabla v.
\]
The function $\chi = \chi(v)$ is known as the \emph{chemotactic sensitivity function} and it usually depends 
on the chemical concentration alone. The function $\zeta =  \zeta(u,v)$ was labeled by Ben-Jacob as the 
\emph{bacterial response function}. It measures the response of the bacteria to the effective sensed 
gradient, namely, $\chi(v) \nabla v$. Based on experimental observations \cite{CCB}, Ben-Jacob and 
collaborators \cite{B-JCoLev00,GKCB} proposed an empirical rule: the bacterial response function should be 
proportional to the product of the bacterial density and its corresponding diffusion 
coefficient\footnote{notice that when the diffusion coefficient is constant, $D_u \equiv 1 > 0$ (after 
normalizations), this rule implies that the bacterial response function should be $\zeta = u$, and the 
chemotactic flux reads $\boldsymbol{J}_c = - u \chi(v) \nabla v$, recovering the standard Keller-Segel model 
\cite{KeSe1,KeSe2}.}, that is,
\begin{equation}
\label{empiricalrule}
|\zeta| \propto u D_u.
\end{equation}

Based on this suggestion and on the form of the effective diffusion coefficient \eqref{diffcoeff}, Leyva 
\emph{et al.} \cite{LMP1} proposed a chemotactic flux of the form
\[
\boldsymbol{J}_c = - \sigma u^2 v \chi(v) \nabla v,
\]
where the chemotactic sensitivity obeys a Lapidus-Schiller receptor law \cite{LapSch1},
\[
 \chi(v) = \frac{\chi_0 K_d}{(K_d + v)^2}.
\]
The result is the taxis term appearing in \eqref{fullsyst}. This new chemotactic term affects the velocity of 
the colony envelope as well as the morphology of the patterns, as numerical simulations and asymptotic 
calculations actually show (see \cite{LMP1}). In addition, the presence of an increasing chemotactic signal
supresses the onset for instabilities \cite{BMP17} (for a related discussion, see \cite{ArLe}). The new model 
seems to capture well the complexity of the observed patterns. In Figure \ref{figsimul}, the results of the 
numerical computation of a non-dimensionalization of system \eqref{fullsyst} - \eqref{incond} (see equations 
\eqref{onondimsyst} - \eqref{onondimic} below) are presented. The simulations were performed in the dense 
branch morphology regime, that is, for medium-to-soft agar and poor level of nutrients. The initial 
condition for the bacterial density is a Gaussian function resembling inoculation of the strain at the center 
of the Petri dish. After some time, the envelope front moves outward, forming spatial patterns which exhibit 
more activity near its boundary.
\begin{figure}[h]
\begin{center}
\subfigure[$t = 0.463 \sim 0^+$]{\label{figsimula}\includegraphics[scale=.25, clip=true]{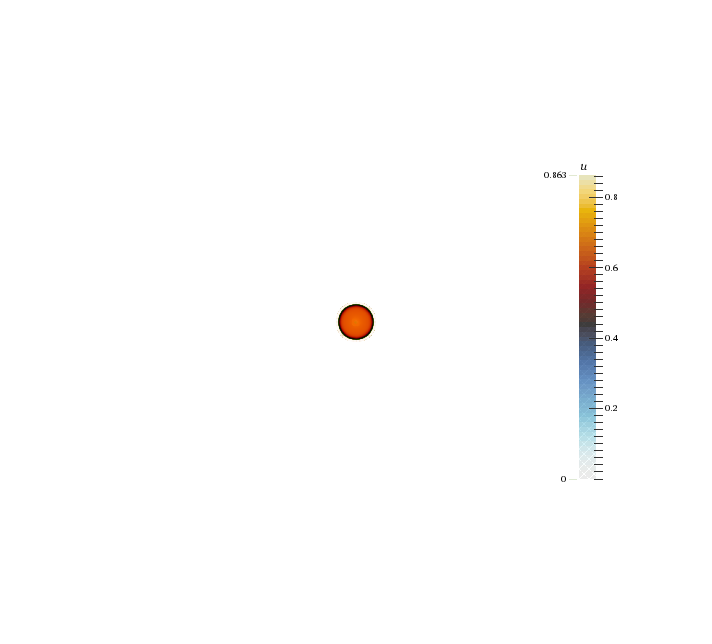}}
\subfigure[$t = 5.098$]{\label{figsimuld}\includegraphics[scale=.25, clip=true]{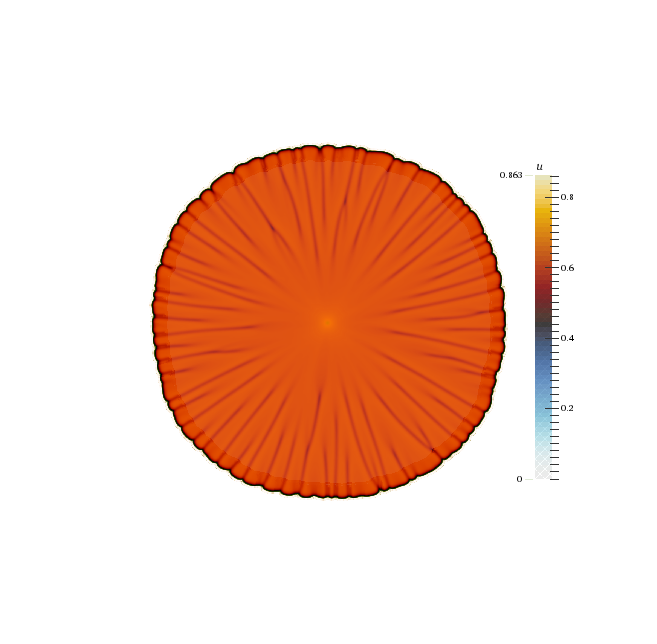}}
\end{center}
\caption{Numerical simulation of the non-dimensional system \eqref{onondimsyst} - 
\eqref{onondimic} on a square domain $\Omega = (-L/2,L/2) \times 
(-L/2, L/2)$, with $L = 680$. The pictures show the computed values of the bacterial density $u$. The initial 
conditions were $v_0(x,y) \equiv 0.71$ (uniform nutrient initial distribution) and $u_0(x,y) = 0.71 e^{-(x^2 
+ y^2)/6.25}$. The bacterial colony is depicted at a time very close to the initial distribution, $t \sim 
0^+$ (figure \ref{figsimula}), and for a later time step where the envelope front has already spreaded out 
(figure \ref{figsimuld}). The parameter values considered were 
$\sigma_0 = 4.0$ (soft agar) and $\chi_0 = 2.5$ (color plot online).}\label{figsimul}
\end{figure}

Density-dependent diffusion coefficients are used in many biological contexts, such as continuous models in 
population biology \cite{Aron80,GuMaC77,MyKr74}, spatial ecology \cite{SKT79}, and eukaryotic cell biology 
\cite{SePO07}, just to mention a few. From the mathematical viewpoint, when the nonlinearity of the diffusion 
coefficient is degenerate (meaning that diffusion approaches zero when the density does also), these 
equations or systems of equations are endowed with interesting properties. Among the new mathematical 
features one finds that equations with degenerate diffusion might exhibit finite speed of propagation 
of initial disturbances \cite{GiKe96}, in contrast with the strictly parabolic case. Another property is the 
emergence of traveling waves of ``sharp'' type (cf. \cite{SaMaKa96b,Sh10}). The cross-diffusion system 
\eqref{fullsyst} has been the subject of recent investigations. Among the new findings we mention the global 
existence of bounded weak solutions and the absence of finite time blow-up \cite{Win17pre}, even in the case 
of several space dimensions \cite{PlWi17a}. Thus, density-dependent diffusion systems with appropriately 
coupled taxis terms warrant attention from the community working on the analysis of partial differential 
equations thanks to their rich mathematical structure.

The underlying problem when using macroscopic field equations like \eqref{fullsyst} is how to justify the 
form of the chemotactic flux and/or of the diffusion coefficient which, under a pure phenomenological 
approach, are simply postulated based on empiric considerations. Another issue is how to incorporate 
microscopic responses of individual cells into the chemotactic sensitivity. There is, however, a stochastic 
process that has become standard (and appropriate) for describing the motion of cells, known as 
the \emph{velocity jump process}. Based on experimental observations \cite{Berg83,BeBr72,KeaLo03} showing 
that individual cells exhibit dynamics based on a run-and-tumble type of motion (see section \ref{secplvjp} 
below), kinetic transport equations of Boltzmann type with jumps in velocity space are common to model 
the movement of biological agents and their chemotactic responses (see \cite{Ptl1,Alt80,ODA88} and the 
references therein). Moreover, kinetic equations have been also used to derive macroscopic mean-field 
continuous equations after a limiting procedure (cf. \cite{HiOt00,OtHi02,ErOt04,HiPa13}), 
involving outer (or Hilbert) asymptotic expansions which are customary in kinetic theory 
\cite{Ells73,HabMat75,LaKe74}.

The purpose of this work is to take a closer look at the mathematical modelling approach that leads to the 
particular form of system \eqref{fullsyst}, by showing that the former can be retrieved as the parabolic 
diffusion limit of a velocity-jump process. Under the assumptions of a parabolic scaling together with an 
unbiased unperturbed turning kernel and a perturbed turning frequency (which depends on the chemical and 
population densities), it is shown that the leading order term in the Hilbert expansion to solutions of the 
Boltzmann type transport equation is a solution to the equation for the bacterial density in 
\eqref{fullsyst}. The key ingredient is to consider that the unperturbed turning frequency does not depend on 
the velocity, but does depend on the zero velocity moment (or marginal density) of the agent population as 
well as on the chemical concentration. Furthermore, the next order in the expansion of the turning 
frequency does depend on the velocity and encodes the bacterial response to the chemotactic signal. In 
addition, it is also shown that kinetic no-mass-flux boundary conditions for solutions to the transport 
equation lead to the no flux boundary conditions for $u$ in \eqref{bcs}. These observations are summarized in 
Theorem \ref{mainprop}. It is important to mention that this procedure is strictly formal. Yet, the 
main purpose of this derivation is to connect the forms of the chemotactic flux 
term and of the density dependent cross-diffusion coefficient \eqref{diffcoeff} with microscopic features of 
the underlying velocity-jump process such as the bias due to the chemotactic signal and the turning frequency 
of individuals. In this fashion, a simple microscopic interpretation of the (otherwise purely 
phenomenological) choices for the particular form of the diffusion and advection terms appearing in 
\eqref{fullsyst} is presented. Up to our knowledge, this work is the first attempt to derive degenerate 
cross-diffusion coefficients and chemotactic sensitivities which are density-dependent and related to each 
other, from a velocity jump process. Although the bacterium \emph{B. subtilis} and system \eqref{fullsyst} 
are used as a prototype, the methodology and results apply more generally.

The plan of the paper is as follows. In section \ref{secplvjp} the master transport equation 
governing velocity jump processes is recalled. Section \ref{secmeth} contains a description of the
method introduced by Hillen and Othmer \cite{HiOt00,OtHi02}, and the interior limit under a 
parabolic scaling is presented. Particular attention is devoted to boundary conditions. It is shown that, in 
the limit, the no-flux boundary conditions in \eqref{bcs} depend on the no normal 
mass flux across the boundary nature of the conditions imposed on the solutions to the transport equations, 
regardless of their particular form. The central section \ref{secmodelling} contains the mathematical 
modelling of the perturbed turning kernel and frequency that leads to the explicit form of system 
\eqref{fullsyst}. Finally, in section \ref{secdiscuss} the main features of this derivation and some 
potential consequences, both at the mathematical and at the biological levels, are discussed.

\section{Velocity-jump processes and formal asymptotic expansions}
\label{secplvjp}

Experimental observations show that some species of bacteria alternate two basic modes of motion: a simple 
linear motion with constant speed, called a \emph{run}, followed by a \emph{tumble}, which is basically a 
reorientation of the cell (for experimental evidence of this behaviour see \cite{BeBr72,Berg83} for the 
species \emph{E. coli}, and \cite{KeaLo03} for \emph{B. subtilis}). During a tumble the bacterium's location 
remains essentially constant while it spins around. After a tumble, the cell selects a new direction of 
movement. Experiments show that (i) the duration of a tumble  is short in comparison to that of a run; (ii) 
the distribution of the change in direction between the run preceding a tumble and the following one is the 
same from tumble to tumble; and, (iii) the speed of the bacterium during a run does not change from run to 
run. To sum up, the frequency of tumbles seems to be a function of time, position and direction, whereas run 
speed and distribution of the change in direction due to a tumble appear to be relatively constant. This 
run-and-tumble type of bacterial movement can be efficiently described by a Boltzmann-type transport equation 
from kinetic theory. This description is called a velocity-jump process, and it is based on the assumption 
that generic agents (which can be particles, cells or microorganisms) make instantaneous jumps in velocity 
space rather than in physical space. Thus, the motion consists on a sequence of runs separated by 
reorientations during which a new velocity is chosen. It is assumed that the changes in velocity are the 
result of a Poisson process of intensity $\lambda > 0$ (called the \textit{turning frequency or turning 
rate}), that is, the probability distribution specifying the time between turning events is exponential with 
mean $1/\lambda$ (the mean run duration). 

In such processes, agents follow a given velocity $\bv' \in V \subset \R^n$, $n =1,2,3$, for a finite time. 
Here $V$ is the set of allowed velocities (often regarded as a symmetric with respect to the origin, compact 
set). A new velocity is chosen according to a function $T : V \times V \to \R$, where $T(\bv,\bv')$ is the 
probability density of turning from velocity $\bv'$ to velocity $\bv$, given that a reorientation occurs. $T$ 
is called the \textit{turning kernel}. Let $p = p(x,t,\bv)$ be the probability density function describing 
the population of agents which, at position $x \in \Omega \subseteq \R^n$ and at time $t>0$, travel with 
velocity $\bv \in V$. Here $\Omega \subseteq \R^n$ is an open set. In a velocity jump process, the evolution 
of $p$ is governed by the following transport (or forward Kolmogorov) equation (cf. \cite{ODA88}),
\begin{equation}
\label{vjp}
 \frac{\partial}{\partial t} p(x,t,\bv) + \bv \cdot \nabla_x p(x,t,\bv) = - \lambda p(x,t,\bv) + \lambda 
\int_V 
T(\bv, \bv') p(x,t,\bv') \, d\bv' + \mathcal{G},
\end{equation}
where $\mathcal{G}$ accounts for the rate of change in $p$ due to reaction. The drift term $\bv \cdot 
\nabla_x p$ represents straight runs with velocity $\bv$, whereas the absorption term $- \lambda p$ on the 
right hand side corresponds to agents leaving the ``state'' $(x,\bv)$ (position $x$ with velocity $\bv$). The 
integral term accounts for the agents jumping into the state $(x,\bv)$ after a reorientation. Since $T : V 
\times V \to \R_+$ is a probability density, we assume it is non-negative and normalized so that
\[
\int_V T(\bv, \bv') \, d\bv' = 1,
\]
for all $\bv \in V$. For many purposes, one does not need to determine the distribution $p = p(x,t,\bv)$ but 
only some of its first velocity moments, such as the marginal density (or zero moment),
\[
\rho(x,t) = \int_V p(x,y,\bv) \, d\bv,
\]
which is the density of individuals at position $x$ at time $t > 0$, whatever their velocity.

Velocity-jump processes have been often used to model the movement of biological individuals, such as the 
bacteria \textit{E. coli} \cite{Berg83} and \textit{B. subtilis} \cite{MRFSASS17}, or even reef fish larvae 
\cite{CHPS04}. A backward equation similar to \eqref{vjp} was first derived by Stroock \cite{Stro74} to 
describe random motion of bacteria. We employ the transport equation \eqref{vjp} as the master equation to 
describe the motion of biological agents, as proposed by Othmer \textit{et al.} \cite{ODA88}. Our goal is to 
derive an approximate, continuous, mean-field limit system that encodes an external bias imposed on the 
motion due to a chemical signal. For that purpose we follow the general method by Hillen and Othmer 
\cite{HiOt00,OtHi02}, which is based on regular perturbation (Hilbert) expansions on solutions to equation 
\eqref{vjp}.

\subsection{The method of Hillen and Othmer}
\label{secmeth}

Let $\Omega \subseteq \R^n$ be an open domain. The set of admissible velocities $V \subset \R^n$ is assumed 
to 
be symmetric with respect to the origin (that is, $\bv \in V \Rightarrow -\bv \in V$) and compact. Let us 
assume that $\lambda > 0$ is constant in $\bv$. Let $\mathcal{K} := \{ f \in L^2(V) \, : \, f \geq 0\}$ be 
the cone of non-negative functions in $L^2(V)$. Then, for fixed $(x,t) \in \Omega \times (0,+\infty)$ we 
regard
\begin{equation}
\label{defcT}
\cT p(\bv) := \int_V T(\bv, \bv') p(x,t,\bv') \, d \bv',
\end{equation}
as an integral operator in $L^2(V)$. Hillen and Othmer \cite{HiOt00} impose the following structural 
assumptions on $T$.
\begin{equation}
\begin{minipage}[c]{4in}
$T(\bv,\bv') \geq 0$ for all $(\bv,\bv') \in V \times V$; $\int_V T(\bv,\bv') \, d\bv = 1$, for all $\bv' \in 
V$, and $\int_V \int_V T(\bv,\bv')^2 \, d\bv \, d \bv' < +\infty$.
\end{minipage}
\label{T1}
\tag{T$_1$}
\end{equation}

\medskip

\begin{equation}
\label{T2}
\tag{T$_2$}
\begin{minipage}[c]{4in}
There exist functions $\eta_0, \phi, \psi$ in $\mathcal{K}$ satisfying $\eta_0 \not\equiv 0$, $\phi, \psi 
\neq 
0$ a.e., such that
\[
\eta_0(\bv) \phi(\bv') \leq T(\bv,\bv') \leq \eta_0(\bv) \psi(\bv'),
\]
for all $(\bv,\bv') \in V \times V$.
\end{minipage}
\end{equation}

\medskip

\begin{equation}
\label{T3}
\tag{T$_3$}
\begin{minipage}[c]{4in}
$\| \mathcal{T} \|_{\langle 1 \rangle^\perp \to \langle 1 \rangle^\perp} < 1$, where $\langle 1 \rangle^\perp 
\subset L^2(V)$ is the orthogonal complement of constants in $L^2(V)$.
\end{minipage}
\end{equation}

\medskip

\begin{equation}
\label{T4}
\tag{T$_4$}
\begin{minipage}[c]{4in}
$\int_V T(\bv,\bv') \, d\bv' = 1$, for all $\bv \in V$.
\end{minipage}
\end{equation}

\medskip

For every given $\lambda > 0$, constant in $\bv \in V$, the \textit{turning operator}, $\cL : L^2(V) \to 
L^2(V)$, is defined as
\begin{equation}
\label{turningop}
\cL p(\bv) := -\lambda p(\bv) + \lambda \cT p(\bv) = -\lambda p(\bv) + \lambda \int_V T(\bv,\bv') p(\bv') \, 
d\bv'.
\end{equation}

The following theorem summarizes the most important properties of the turning operator that we shall need.

\begin{theorem}[Hillen and Othmer \cite{HiOt00,OtHi02}]\label{theoHO}
Under assumptions \eqref{T1} - \eqref{T2} the following properties hold:
\begin{itemize}
\item[(a)] $\mu = 0$ is a simple eigenvalue of $\cL : L^2(V) \to L^2(V)$ with eigenfunction $f(\bv) \equiv 1$.
\item[(b)] For all $g \in \langle 1 \rangle^\perp$,
\[
\langle g, \cL g \rangle_{L^2} = \int_V g \cL g \, d\bv \leq - \mu_2 \| g \|_{L^2(V)}^2,
\]
where $\mu_2 = \lambda (1- \| \mathcal{T} \|_{\langle 1 \rangle^\perp \to \langle 1 \rangle^\perp})$.
\item[(c)] All non-zero eigenvalues $\mu$ of $\cL$ satisfy the estimate $-2 \lambda < \Re \mu \leq - \mu_2 < 
0$, and to within scalar multiples there is no other positive eigenfunction.
\item[(d)] $\| \cL \|_{L^2 \to L^2} \leq 2 \lambda$.
\item[(e)] $\cL$ restricted to $\langle 1 \rangle^\perp \subset L^2(V)$ has a linear pseudo-inverse $\cF$ 
with 
norm
\[
\| \cF \|_{\langle 1 \rangle^\perp \to \langle 1 \rangle^\perp} \leq \frac{1}{\mu_2}.
\]
\end{itemize}
\end{theorem}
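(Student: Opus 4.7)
The plan is to reduce every claim to spectral properties of the integral operator $\cT$ and then exploit the splitting $\cL=-\lambda I+\lambda\cT$. The normalizations in (T1) and (T4) make the constant function an eigenvector and split $L^2(V)=\langle 1\rangle\oplus\langle 1\rangle^\perp$ into $\cT$-invariant pieces, while the pointwise two-sided bound (T2) supplies the positivity needed for a Krein--Rutman argument.

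For part (a) I first verify directly that $\cT 1(\bv)=\int_V T(\bv,\bv')\,d\bv'=1$, so $\cL 1=0$. Simplicity of the eigenvalue $0$ and the uniqueness part of (c) then follow from the Krein--Rutman theorem applied to $\cT$: by (T1) the kernel is Hilbert--Schmidt and $\cT$ is compact on $L^2(V)$, and by (T2) the operator is strongly positive on the cone $\mathcal{K}$, so the spectral radius is a simple dominant eigenvalue carried by a strictly positive eigenfunction, which by the normalization must be $1$ with constant eigenvector.

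For (b), a Fubini computation using (T1) gives
$$\int_V (\cT g)(\bv)\,d\bv=\int_V g(\bv')\Big(\int_V T(\bv,\bv')\,d\bv\Big)d\bv'=\int_V g(\bv')\,d\bv'=0,$$
so $\cT$ leaves $\langle 1\rangle^\perp$ invariant. Cauchy--Schwarz then yields, for $g\in\langle 1\rangle^\perp$,
$$\langle g,\cL g\rangle=-\lambda\|g\|^2+\lambda\langle g,\cT g\rangle\le-\lambda\|g\|^2+\lambda\|\cT\|_{\langle 1\rangle^\perp\to\langle 1\rangle^\perp}\|g\|^2=-\mu_2\|g\|^2.$$
Part (d) comes from a pointwise Cauchy--Schwarz, $|\cT f(\bv)|^2\le\bigl(\int T\,d\bv'\bigr)\bigl(\int T|f|^2\,d\bv'\bigr)$, which on integration in $\bv$ together with the normalizations in (T1) and (T4) gives $\|\cT\|_{L^2\to L^2}\le 1$ and thus $\|\cL\|\le 2\lambda$. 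For (c), any eigenvalue $\mu$ of $\cL$ has the form $\mu=-\lambda+\lambda z$ with $|z|\le 1$, so $|\mu+\lambda|\le\lambda$ gives $-2\lambda\le\Re\mu\le 0$; for $\mu\ne 0$ the corresponding eigenvector lies in $\langle 1\rangle^\perp$ by simplicity, and (b) sharpens the upper bound to $\Re\mu\le-\mu_2<0$, while the endpoint $\Re\mu=-2\lambda$ is ruled out by (T2), since it would force $\cT g=-g$ against the strict positivity of the kernel.

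Part (e) is a direct consequence of (b): the estimate $\|\cL g\|\,\|g\|\ge|\langle g,\cL g\rangle|\ge\mu_2\|g\|^2$ gives the coercivity bound $\|\cL g\|\ge\mu_2\|g\|$ on $\langle 1\rangle^\perp$, so $\cL|_{\langle 1\rangle^\perp}$ is injective with closed range; the same estimate applied to $\cL^*$ (which satisfies analogous hypotheses with the transposed kernel) shows triviality of the cokernel, producing a bounded pseudo-inverse $\cF$ with $\|\cF\|\le 1/\mu_2$. The main obstacle is the Krein--Rutman step, because $\cT$ is generally not self-adjoint, so simplicity of $\mu=0$ and uniqueness of the positive eigenfunction must be extracted from (T2) directly rather than from a spectral theorem; everything else is a matter of contraction estimates and Fubini-type manipulations once the invariant splitting $L^2(V)=\langle 1\rangle\oplus\langle 1\rangle^\perp$ is in place.
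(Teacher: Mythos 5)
The paper does not prove this theorem; it is quoted verbatim from Hillen and Othmer \cite{HiOt00,OtHi02}, so there is no in-paper argument to compare against. Your proof is essentially the standard one from that source: the splitting $L^2(V)=\langle 1\rangle\oplus\langle 1\rangle^{\perp}$ into $\cT$-invariant subspaces, positivity of the kernel for (a) and (c), contraction estimates for (b) and (d), and coercivity plus a Fredholm argument for (e); all of these steps are sound. Two points deserve tightening. First, \eqref{T2} does not make $\cT$ strongly positive in the classical Krein--Rutman sense, since $\eta_0$ may vanish on a set of positive measure; the two-sided bound $\eta_0\phi\le T\le\eta_0\psi$ says exactly that $\cT$ is $u_0$-positive (with $u_0=\eta_0$) in the sense of Krasnosel'skii, and it is his theorem on $u_0$-positive compact operators---not Krein--Rutman proper---that yields simplicity of the spectral radius, uniqueness of the positive eigenfunction, and the strict dominance $|z|<1$ for every other eigenvalue $z$ of $\cT$ (which is also the cleanest way to rule out the endpoint $\Re\mu=-2\lambda$). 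Second, the claim that an eigenvector for $\mu\neq 0$ lies in $\langle 1\rangle^{\perp}$ does not follow ``by simplicity'': $\cL$ is not self-adjoint, so eigenvectors for distinct eigenvalues need not be orthogonal. The correct justification is either that $\cL^{*}1=0$ by the normalization in \eqref{T1}, so integrating $\cL\psi=\mu\psi$ over $V$ gives $\mu\int_V\psi\,d\bv=0$; or, equivalently, to decompose $\psi=c+g$ with $g\in\langle 1\rangle^{\perp}$ and use the invariance of $\langle 1\rangle^{\perp}$ that you already established to conclude $\mu c=0$. With these repairs the argument is complete.
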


Let us now consider a velocity-jump processes (for simplicity in the course of this section, in the absence 
of reaction terms or external forces) governed by the equation
\begin{equation}
\label{wvjp}
 \frac{\partial}{\partial t} p(x,t,\bv) + \bv \cdot \nabla_x p(x,t,\bv) = - \lambda p(x,t,\bv) +  \int_V 
\lambda T(\bv, \bv') p(x,t,\bv') \, d\bv',
\end{equation}
where the turning rate $\lambda$ may as well depend on the velocity $\bv \in V$ and on other external 
signals, represented by $S = S(x,t)$ (for example, the density function of a chemo-attractant or a 
chemo-repellent substance). $\lambda$ may also depend on $(x,t)$ via other state variables such as the 
marginal density $\rho = \rho(x,t)$. As Hillen and Othmer point out, the dependence of $\lambda$ is not only 
on $S$ pointwise (on the value of $S$ at $(x,t)$), but also on $\nabla S$, or nonlocally on $S$ as well. 
Thus, following \cite{OtHi02}, we write $\lambda = \lambda(\bv,\hat S)$ to indicate this behaviour. 

According to custom in the study of transport equations, we consider a scaling limit that allows to 
approximate the description of the motion by a simpler model, such as a diffusion- or drift-dominated 
partial differential equation. Here we adopt a parabolic (or diffusion) scaling with its corresponding 
Hilbert expansion and limiting equation (cf. \cite{HiOt00,Hil04,HiPa13}). The parabolic scaling takes the form
\begin{equation}
\label{scal}
\tau = \ep^2 t, \quad \xi = \ep x,
\end{equation}
under the fundamental assumption that
\begin{equation}
\label{ascal}
\tau, \, \xi \, = O(1), \quad \text{as} \;\; \ep \to 0^+.
\end{equation}

\begin{remark}
In the description of the motion of bacteria, for instance, these quantities can be determined 
experimentally, leading to the justification of the use of this scaling in certain regimes of the physical 
parameters under consideration. For example, the bacterium {\em E. coli} on a Petri dish displays a 
characteristic speed of $s \approx 10$ $\mu$m/sec. and a mean run time of $1/\lambda \approx 1$ sec. (cf. 
\cite{OtXu13}). Under similar experimental conditions, strains of {\em B. subtilis} show characteristic 
speeds of $s \approx 1$ $\mu$m/sec. (cf. \cite{HPK95}) with average run time of $1 / \lambda \approx 1$ sec. 
(cf. \cite{ITFK05}). Choosing a typical experimental length scale of $L = 1$ mm. and a time scale of $T = 
10^4$ sec., we have that $s = O(\ep^{-1})$ and $\lambda = O(\ep^{-2})$, with $\epsilon = 10^{-2}$ for {\em E. 
coli} and $\ep = 10^{-1}$ for {\em B. subtilis}. Adopting the scaling \eqref{scal} we have that, in both 
cases, the assumption \eqref{ascal} holds for a small, non-dimensional parameter $\ep > 0$.
\end{remark}

Rewriting equation \eqref{wvjp} in the new space and time coordinates, $(\xi,\tau)$, we obtain
\begin{equation}
\label{svjp}
\ep^2 \frac{\partial}{\partial \tau} p(\xi,\tau,\bv) + \ep \bv \cdot \nabla_\xi p(\xi,\tau,\bv) = - \lambda 
p(\xi,\tau,\bv) + \int_V \lambda T(\bv,\bv') p(\xi,\tau,\bv') \, d\bv',
\end{equation}
where $(\xi,\tau) \in \widetilde{\Omega} \times (0,+\infty)$, $\widetilde{\Omega} := \ep \Omega = \{ \ep x \, 
: \, x \in \Omega\}$, $\bv \in V$.

We are interested in the case where the velocity-jump process is influenced by the presence of an external
chemical field, whose concentration in the scaled variables will be denoted as $S = S(\xi,\tau)$. Following 
\cite{OtHi02} we propose a first order perturbation of a certain turning frequency $\lambda_0$ 
\textit{independent of} $\bv$, together with an unperturbed turning kernel $T_0$ that depends only on 
$(\bv,\bv') \in V \times V$:
\begin{equation}
\label{ptr}
\begin{aligned}
T &= T_0(\bv,\bv'), \\
\lambda(\bv, \hat{S}) &= \lambda_0(\xi,\tau,\hat{S}) + \ep \lambda_1(\bv,\hat{S}).
\end{aligned}
\end{equation}
Here the turning kernel is unperturbed from a normalized turning kernel $T_0 : V \times V \to \R_+$ which is 
assumed to satisfy assumptions \eqref{T1} - \eqref{T4}, whereas $\lambda_0$ may depend on $(\xi,\tau)$ via 
the marginal density $\rho$ and the chemical concentration $S$. The important fact is that $\lambda_0$ does 
not depend on $\bv$ (it is constant with respect to the operator acting on $L^2(V)$ for each 
fixed $(\xi,\tau)$). The perturbation $\lambda_1$ is assumed to be bounded. Thus, we denote
\begin{equation}
\label{defL0}
\cL_0 p(\bv) := - \lambda_0 p(\bv) + \lambda_0 \int_V T_0(\bv,\bv') p(\bv') \, d\bv', \qquad \quad \cL_0 : 
L^2(V) \to L^2(V),
\end{equation}
an operator which belongs to the class of operators acting on $L^2(V)$ as described in Theorem \ref{theoHO}.

Under our parabolic scaling, it is assumed that the population density underlies a regular perturbation 
expansion of the form
\begin{equation}
\label{hilexp}
p(\xi,\tau,\bv) = p_0 (\xi,\tau,\bv) + \ep p_1 (\xi,\tau,\bv) + \ep^2 p_2(\xi,\tau,\bv) + O(\ep^3),
\end{equation}
also called an \textit{outer or Hilbert expansion} in the context of kinetic theory (cf. 
\cite{Ells73,HabMat75,LaKe74}). Since the marginal density is independent of $\ep >0$ we may assume that
\begin{equation}
\label{pjzero}
\int_V p_j(\xi,\tau,\bv) \, d\bv = 0,
\end{equation}
for all $j \geq 1$ and all $(\xi, \tau) \in \widetilde{\Omega} \times (0,+\infty)$, without loss of 
generality.

\subsection{Interior parabolic limit}
\label{secipl}

In this section we derive a mean field equation for the leading order term in the Hilbert expansion 
\eqref{hilexp} which has been reported in \cite{OtHi02}. We include the details of such derivation for 
completeness. Let us assume that $(\xi, \tau) \in \widetilde{\Omega} \times (0,+\infty)$ are fixed. Being 
$\widetilde{\Omega}$ an open set, this means that we do not consider boundary conditions for the moment. Upon 
substitution of  the expansions \eqref{hilexp} and \eqref{ptr} into equation \eqref{svjp} we obtain a 
hierarchy of equations by comparing terms of equal order in powers of $\ep$. First, at leading order $O(1)$ 
we obtain the equation
\begin{equation}
\label{orderzero}
0 = - \lambda_0 p_0(\xi,\tau,\bv) + \lambda_0 \int_V T_0(\bv,\bv') p_0(\xi,\tau,\bv') \, d\bv' = \cL_0 
p_0(\xi,\tau,\bv).
\end{equation}

In view of Theorem \ref{theoHO}, $p_0(\xi,\tau,\bv)$ belongs to the kernel of $\cL_0$ with dimension equal 
to one (the zero eigenvalue is simple) and generated by the eigenfunction $f(\bv) = 1$. Thus, we conclude 
that $p_0$ does not depend on $\bv$,
\[
p_0(\xi,\tau,\bv) = \bpz(\xi,\tau)
\]
($p_0$ is constant in $\bv$ for each fixed $(\xi,\tau)$). This implies, in turn, that
\begin{equation}
\label{goodp0}
\int_V \bv \cdot \nabla_\xi \bpz(\xi,\tau) \, d\bv = 0,
\end{equation}
because $V$ is a symmetric set.

At the next order, $O(\ep)$, the resulting equation is
\[
\begin{aligned}
\bv \cdot \nabla_\xi \bpz(\xi,\tau) &= -\lambda_0 p_1(\xi,\tau,\bv) - \lambda_1(\bv,\hat{S}) \bpz(\xi,\tau) + 
\lambda_0 \int_V T_0(\bv,\bv') p_1(\xi,\tau,\bv) \, d\bv' + \\ & \;\; + \int_V \lambda_1(\bv',\hat{S}) 
T_0(\bv,\bv') \bpz(\xi,\tau) \, d\bv',
\end{aligned}
\]
which can be recast as
\begin{equation}
\label{orderone}
\cL_0 p_1(\xi,\tau,\bv) = \bv \cdot \nabla_\xi \bpz(\xi,\tau) + \lambda_1(\bv,\hat{S}) \bpz(\xi,\tau) - 
\int_V 
\lambda_1(\bv',\hat{S}) T_0(\bv,\bv') \bpz(\xi,\tau) \, d\bv'.
\end{equation}

It is well-known (cf. \cite{Kat80}) that if $\cL$ is a closed, linear operator and $\mu$ is an eigenvalue 
with finite multiplicity, then its complex conjugate, $\overline{\mu}$, is an eigenvalue of the formal 
adjoint $\cL^*$ with the same algebraic and geometric multiplicities. Hence, in view of Theorem \ref{theoHO}, 
we observe that $\mu = 0$ is a simple eigenvalue of $\cL^*$ with a unique positive eigenfunction 
$g(\bv) \equiv 1 \in L^2(V)$. Therefore, the right hand side of \eqref{orderone} must satisfy a solvability 
condition, inasmuch as
\[
\langle 1, \cL_0 p_1(\xi,\tau,\bv) \rangle_{L^2(V)} = \langle \cL_0^* \, 1\, , p_1(\xi,\tau,\bv) 
\rangle_{L^2(V)} = 0,
\]
as $\cL_0$ is singular. In view of \eqref{goodp0}, the solvability condition reads
\[
0 = \int_V \lambda_1(\bv,\hat{S}) \bpz(\xi,\tau) \, d\bv - \int_V \int_V \lambda_1(\bv',\hat{S}) 
T_0(\bv,\bv') 
\bpz(\xi,\tau) d\bv \, d\bv' ,
\]
a relation which is trivially satisfied because of \eqref{T1}. By Theorem \ref{theoHO} we can define a 
pseudo-inverse of $\cL_0$ on $\langle 1 \rangle^\perp \subset L^2(V)$, denoted by $\cF_0 = ({\cL_0}_{|\langle 
1 \rangle^\perp})^{-1}$, and, consequently, the solution to \eqref{orderone} can be expressed as
\begin{equation}
\label{solp1}
p_1(\xi,\tau,\bv) = \cF_0 \Big( \bv \cdot \nabla_\xi \bpz(\xi,\tau) + \lambda_1 (\bv,\hat{S}) \bpz(\xi,\tau) 
- 
\int_V \lambda_1(\bv',\hat{S}) T_0(\bv,\bv') \bpz(\xi,\tau) \, d\bv' \Big).
\end{equation}

Let us denote the operator
\[
\cR_1 p(\bv) := \bv \cdot \nabla_\xi p(\bv) + \lambda_1 (\bv,\hat{S}) p(\bv) - \int_V \lambda_1(\bv',\hat{S}) 
T_0(\bv,\bv') p(\bv') \, d\bv',
\]
for $p(\bv) \in L^2(V)$. Under assumptions \eqref{T1} - \eqref{T4} and by boundedness of $\lambda_1$, this is 
a linear operator acting on $L^2(V)$. Whence, the solution $p_1$ can be written in simplified form as 
\begin{equation}
\label{simpp1}
p_1(\xi,\tau,\bv) = \cF_0 \big(\cR_1 (\bpz(\xi,\tau))\big).
\end{equation}

At order $O(\ep^2)$ the equation for $p$ is given by
\begin{equation}
\label{ordertwo}
\begin{aligned}
\frac{\partial}{\partial \tau} \bpz(\xi,\tau) + \bv \cdot \nabla_\xi p_1(\xi,\tau,\bv) &= - 
\lambda_1(\bv,\hat{s}) p_1(\xi,\tau,\bv) - \lambda_0 p_2(\xi,\tau,\bv) + \\ &\;\; + \lambda_0 \int_V 
T_0(\bv,\bv') p_2(\xi,\tau,\bv') \, d\bv' + \\ &\;\; + \int_V \lambda_1(\bv',\hat{S}) T_0(\bv,\bv') 
p_1(\xi,\tau,\bv') \, d\bv',
\end{aligned}
\end{equation}
yielding,
\[
\begin{aligned}
\cL_0 p_2(\xi,\tau,\bv) &= \frac{\partial}{\partial \tau} \bpz(\xi,\tau) + \bv \cdot \nabla_\xi 
p_1(\xi,\tau,\bv) + \lambda_1(\bv,\hat{s}) p_1(\xi,\tau,\bv) + \\ & \;\; - \int_V \lambda_1(\bv',\hat{S}) 
T_0(\bv,\bv') p_1(\xi,\tau,\bv') \, d\bv'\\&= \frac{\partial}{\partial \tau} \bpz(\xi,\tau) + \cR_1 \big( 
p_1(\xi,\tau,\bv) \big).
\end{aligned}
\]
By the same argument, the solvability condition for $p_2$ can be obtained by taking the $L^2(V)$ product of 
last equation with the (constant) sole positive eigenfunction of $\cL_0^*$. The result is
\begin{equation}
\label{solvcond2}
0 = \int_V \Big[ \frac{\partial}{\partial \tau} \bpz(\xi,\tau) + \cR_1 \big( \cF_0 \big( 
\cR_1(\bpz(\xi,\tau)) 
\big) \big) \Big] \, d\bv,
\end{equation} 
after substituting \eqref{simpp1}. Let us now define 
\[
\blam_1(\bv,\hat{S}) := \int_V \lambda_1(\bv',\hat{S}) T_0(\bv,\bv') \, d\bv',
\]
which can be interpreted as the average bias, over all incoming velocities, of the turning rate to $\bv$. 
Thus,
\begin{equation}
\label{simpp11}
\begin{aligned}
p_1(\xi,\tau,\bv) &= \cF_0 \big( \cR_1 (\bpz(\xi,\tau)) \big) \\&= \cF_0 \Big( \bv \cdot \nabla_\xi 
\bpz(\xi,\tau) + \lambda_1 (\bv,\hat{S}) \bpz(\xi,\tau) - \int_V \lambda_1(\bv',\hat{S}) T_0(\bv,\bv') 
\bpz(\xi,\tau) \, d\bv'\Big) \\
&= \cF_0 \big( \bv \cdot \nabla_\xi \bpz(\xi,\tau) \big) + \cF_0 \big( (\lambda_1(\bv,\hat{S}) - 
\blam_1(\bv,\hat{S})) \bpz(\xi,\tau) \big).
\end{aligned}
\end{equation}

First note that, clearly,
\[
\int_V  \frac{\partial}{\partial \tau} \bpz(\xi,\tau) \, d\bv = |V| \frac{\partial}{\partial \tau} 
\bpz(\xi,\tau).
\]
Also, substituting \eqref{simpp11} we obtain
\begin{equation}
\label{ddot}
\begin{aligned}
\int_V \bv \cdot \nabla_\xi p_1(\xi,\tau,\bv) \, d\bv &= \int_V \bv \cdot \nabla_\xi \Big( \cF_0 \big( \bv 
\cdot \nabla_\xi \bpz(\xi,\tau) \big) \Big) \, d\bv + \\ \; &+ \int_V \bv \cdot \nabla_\xi \Big( \cF_0 \big( 
(\lambda_1(\bv,\hat{S}) - \blam_1(\bv,\hat{S})) \bpz(\xi,\tau) \big) \Big) \, d\bv.
\end{aligned}
\end{equation}
From the identity
\[
\bv \cdot \nabla_\xi \Big( \cF_0 \big( \bv \cdot \nabla_\xi \bpz \big) \Big) = \nabla_{\xi} \cdot \Big( [\bv 
(\cF_0 \bv)^\top] \nabla_\xi \bpz \Big)
\]
(observe that $\bv (\cF_0 \bv)^\top = \bv \otimes (\cF_0 \bv)$ is a tensor product), we can write the first 
integral on the right hand side of \eqref{ddot} as
\[
\int_V \bv \cdot \nabla_\xi \Big( \cF_0 \big( \bv \cdot \nabla_\xi \bpz(\xi,\tau) \big) \Big) \, d\bv = - 
\nabla_{\xi} \cdot \big( |V| \bD \nabla_\xi \bpz(\xi,\tau) \big),
\]
where the \textit{diffusion tensor} $\bD$ is defined as
\begin{equation}
\label{defD}
\bD := - \frac{1}{|V|} \int_V  \bv \otimes (\cF_0 \bv)   \, d\bv. 
\end{equation} 

Let us as well define the \textit{chemotactic velocity} $\bw_c$ as
\begin{equation}
\label{chemvel}
\bw_c := - \frac{1}{|V|} \int_V \bv \cF_0 \big(\blam_1(\bv,\hat{S}) -  \lambda_1(\bv,\hat{S}) \big) \, d\bv.
\end{equation}
Hence, from the identity 
\[
\int_V \bv \cdot \nabla_\xi \big( \beta \bpz \big) \, d\bv = \nabla_{\xi} \cdot \Big( \bpz \int_V \beta \bv  
\, 
d\bv \Big),
\]
which is valid for any scalar function $\beta = \beta(\xi,\tau,\bv,\hat{S})$, we recognize that the second 
integral on the right hand side of \eqref{ddot} can be written as
\[
\int_V \bv \cdot \nabla_\xi \Big( \cF_0 \big( (\lambda_1(\bv,\hat{S}) - \blam_1(\bv,\hat{S})) \bpz(\xi,\tau) 
\big) \Big) \, d\bv = \nabla_{\xi} \cdot \big( |V| \bpz(\xi,\tau) \bw_c \big).
\]
Therefore, the solvability condition \eqref{solvcond2} can be recast as
\[
\begin{aligned}
0 &= |V|  \frac{\partial}{\partial \tau} \bpz(\xi,\tau)  - \nabla_{\xi} \cdot \big( |V| \bD \nabla_\xi 
\bpz(\xi,\tau) \big) + \nabla_{\xi} \cdot \big( |V| \bpz(\xi,\tau) \bw_c \big) + \\ &+ \!\int_V 
\lambda_1(\bv,\hat{S}) \cF_0 \Big( \bv \cdot \nabla_\xi \bpz(\xi,\tau) + (\lambda_1(\bv,\hat{S}) - 
\blam_1(\bv,\hat{S}))\bpz(\xi,\tau)\Big) \, d\bv +\\ &- \!\int_V \int_V \!\lambda_1(\bv',\hat{S}) 
T_0(\bv,\bv')   \cF_0 \Big( \bv' \!\cdot \! \nabla_\xi \bpz(\xi,\tau) + (\lambda_1(\bv',\hat{S})\! - \!
\blam_1(\bv',\hat{S}))\bpz(\xi,\tau)\Big) d\bv' d\bv.
\end{aligned}
\]

The last two integrals cancel each other because of \eqref{T1} and Fubini's theorem. Thus, the resulting 
equation for $\bpz = 
\bpz(\xi,\tau)$ is
\begin{equation}
\label{parabeq}
\frac{\partial \bpz}{\partial \tau} = \nabla_{\xi} \cdot \big( \bD \, \nabla_\xi \bpz \big) - \nabla_{\xi} 
\cdot 
\big( \bpz \bw_c \big). 
\end{equation}
This is essentially a diffusion equation, with drift term given by the chemotactic velocity $\bw_c$, for the 
total marginal density 
\[
\rho(\xi,\tau) = |V| \bpz(\xi,\tau) = \int_V p(\xi,\tau,\bv) \, d\bv,
\]
in view of \eqref{pjzero}.

To sum up, we can state the following

\begin{proposition}[interior parabolic limit with perturbed turning rate]\label{propint}
Let $\Omega \subset \R^n$ be an open domain, and let $V \subset \R^n$ be a compact, symmetric with respect to 
the origin set of admissible velocities. Let $T_0 : V \times V \to \R$ be a turning kernel satisfying 
assumptions \eqref{T1} - \eqref{T4}. Furthermore, let us assume that there exists a small parameter $\ep > 0$ 
such that $\tau = \ep^2 t = O(1)$ and $\xi = \ep x = O(1)$, for all $(x,t) \in \Omega \times (0,+\infty)$ 
(parabolic scaling). If $p = p(\xi,\tau,\bv)$ is a solution to the scaled transport equation \eqref{svjp}, 
where the turning kernel admits a first order perturbation of the form $\lambda = \lambda_0 + \ep 
\lambda_1(\bv)$ (with $\lambda_0$ independent of $\bv$), then the leading order term $p_0$ of a regular 
Hilbert expansion $p = p_0 + \ep p_1 + \ep^2 p_2 + O(\ep^3)$ satisfies 
\[
p_0(\xi,\tau,\bv) = \bpz(\xi,\tau), \qquad \rho(\xi,\tau) = \int_V p(\xi,\tau,\bv) \, d\bv = |V| 
\bpz(\xi,\tau) ,
\]
where the marginal density $\rho = \rho(\xi,\tau)$  is a solution to the parabolic limit equation
\[
\frac{\partial \rho}{\partial \tau} = \nabla_{\xi} \cdot \big( \bD \, \nabla_\xi \rho  -  \rho 
\bw_c \big),
\]
for $\xi \in \widetilde{\Omega} = \epsilon \Omega$ (away from 
the boundary), and $\tau > 0$. The diffusion tensor $\bD \in \R^{n \times n}$ and the chemotactic velocity 
$\bw_c \in \R^n$ are given by
\[
\bD = - \frac{1}{|V|} \int_V \bv \otimes (\cF_0 \bv) \, d\bv,
\]
and
\[
\bw_c = - \frac{1}{|V|} \int_V \bv \cF_0 \big( \blam_1(\bv) - \lambda_1(\bv) \big) \, d\bv,
\]
respectively, where
\[
\blam_1(\bv) = \int_V \lambda_1(\bv') T_0(\bv,\bv') \, d\bv',
\]
is the average bias. Here $\cF_0$ denotes the pseudo-inverse of the operator  $\cL_0$ defined by 
\eqref{defL0}, restricted to the subspace $\langle 1 \rangle^\perp \subset L^2(V)$.
\end{proposition}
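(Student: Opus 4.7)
The plan is to substitute the regular Hilbert expansion \eqref{hilexp} together with the perturbed turning rate \eqref{ptr} into the rescaled transport equation \eqref{svjp} and match terms at successive orders in $\ep$, invoking at each stage the spectral information for $\cL_0$ collected in Theorem \ref{theoHO}. At leading order $O(1)$ only the turning operator survives, so one obtains $\cL_0 p_0 = 0$. Since $\mu=0$ is simple with one-dimensional eigenspace spanned by the constants, this forces $p_0 = \bpz(\xi,\tau)$, independent of $\bv$; integrating in $\bv$ and using the normalization $\int_V p_j\, d\bv = 0$ for $j\geq 1$ then yields $\rho = |V|\bpz$.

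At order $O(\ep)$ the equation rearranges to $\cL_0 p_1 = \cR_1 \bpz$, where $\cR_1$ combines the drift $\bv \cdot \nabla_\xi$ with the terms involving $\lambda_1$. The Fredholm solvability condition (pairing with the constant eigenfunction of $\cL_0^*$) is automatically satisfied because $V$ is symmetric in $\bv$ and $T_0$ is normalized in $\bv$ by assumption \eqref{T1}. Part (e) of Theorem \ref{theoHO} then permits the inversion $p_1 = \cF_0(\cR_1 \bpz)$, which by linearity decomposes into a pure-gradient piece $\cF_0(\bv \cdot \nabla_\xi \bpz)$ and a bias piece $\cF_0((\lambda_1 - \blam_1)\bpz)$; these two pieces will be the sources of $\bD$ and $\bw_c$ respectively.

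The central step is the solvability condition at order $O(\ep^2)$, where the equation reads $\cL_0 p_2 = \partial_\tau \bpz + \cR_1 p_1$; integrating over $V$ kills the left hand side. By Fubini and the normalization $\int_V T_0(\bv,\bv')\, d\bv = 1$, the contributions $\int_V \lambda_1 p_1\, d\bv$ and the double integral against $\lambda_1(\bv',\hat{S}) T_0(\bv,\bv')$ cancel exactly. For the remaining drift contribution I would apply the tensor identity
\[
\bv \cdot \nabla_\xi \bigl( \cF_0(\bv \cdot \nabla_\xi \bpz) \bigr) = \nabla_\xi \cdot \bigl( [\bv \otimes \cF_0 \bv]\, \nabla_\xi \bpz \bigr)
\]
to pull the spatial derivative outside the $\bv$-integral and read off the diffusion tensor \eqref{defD}, and use the analogous scalar identity on the bias piece to extract $\nabla_\xi \cdot(|V|\bpz \bw_c)$ with $\bw_c$ as in \eqref{chemvel}. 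Dividing by $|V|$ and switching from $\bpz$ to $\rho = |V|\bpz$ then produces the advertised equation. I expect the main obstacle to lie in the careful Fubini bookkeeping that makes the $\lambda_1 p_1$ terms cancel cleanly; any residue here would contaminate $\bw_c$ with a spurious zeroth-order drift. Once that cancellation is verified, the identification of $\bD$ and $\bw_c$ is purely algebraic, given the $L^2(V)$-boundedness of $\cF_0$ furnished by Theorem \ref{theoHO}(e).
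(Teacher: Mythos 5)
Your proposal is correct and follows essentially the same route as the paper's derivation in Section 2.2: the same order-by-order hierarchy, the same solvability conditions (trivially satisfied at order $O(\ep)$ by the symmetry of $V$ and the normalization in \eqref{T1}), the same Fubini cancellation of the $\lambda_1 p_1$ terms at order $O(\ep^2)$, and the same tensor identity to extract $\bD$ and $\bw_c$. No substantive differences to report.
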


\subsection{Boundary conditions}

In this section, boundary conditions for the leading term $\bpz = \bpz(\xi,\tau)$ of the Hilbert 
expansion \eqref{hilexp} are derived, under the assumptions of a parabolic scaling with perturbed turning 
rate of the previous section. In general, these boundary conditions depend upon the boundary conditions 
imposed on the solutions to the transport equation. We shall see, however, that when the turning kernel is 
unperturbed and the turning frequency is $O(\epsilon)$-perturbed, the boundary conditions on the limit 
equation depend only on the no normal mass flux nature of the boundary conditions for the transport equation, 
independently of their form.

Let us assume that $\Omega \subset \R^n$ is a bounded domain with piecewise smooth boundary. In the rescaled 
variables, for each $\xi \in \partial \widetilde{\Omega}$ with $\widetilde{\Omega} = \epsilon \Omega$, we 
denote the outer unit normal vector at $\xi \in \partial \widetilde{\Omega}$ by $\hat{\nu}(\xi)$. The 
boundary of the phase space can be written as
\[
\partial \widetilde{\Omega} \times V = \Gamma_+ \cup \Gamma_- \cup \Gamma_0,
\]
where
\[
\Gamma_\pm := \{ (\xi,\bv) \in \partial \widetilde{\Omega} \times V \, : \, \pm \,  \bv \cdot \hat{\nu}(\xi) 
> 
0 \}, \quad \Gamma_0 := \{ (\xi,\bv) \in \partial \widetilde{\Omega} \times V \, : \, \bv \cdot 
\hat{\nu}(\xi) 
= 0 \}.
\]
We assume that $\Gamma_0$ is of zero measure with respect to $d \gamma_\xi d\bv$, where $d \gamma_\xi$ is the 
Lebesgue measure on $\partial \widetilde{\Omega}$, and define the trace spaces
\[
L^2_\pm := L^2 ( \Gamma_\pm ; |\bv \cdot \hat{\nu}(\xi)| d \gamma_\xi d\bv \}.
\]
Let us suppose that $p$ is regular enough, say, $p \in H^1(\widetilde{\Omega} \times V)$, so that the 
range of the trace operator lies in $L^2(\partial \widetilde{\Omega} \times V)$. Thus, let us 
denote $p_{|\Gamma_\pm} \in L^2_\pm$ as the trace of $p \in H^1(\widetilde{\Omega} \times V)$ on 
$\Gamma_\pm$, for fixed $\tau > 0$. For instance, if $p$ is smooth enough (at least of class $C^1$) then it 
is known that the trace of $p$ coincides with the continuous limit to the boundary, so 
that,
\begin{equation}
\label{tracecond}
p_{|\partial \widetilde{\Omega} \times V}(\xi,\tau,\bv) = \lim_{\substack{\widetilde{\xi} \in 
\widetilde{\Omega}\\ \widetilde{\xi} \to \xi}} \, p(\widetilde{\xi}, \tau, \bv), \qquad \text{for each } \, 
\xi \in \partial \widetilde{\Omega}.
\end{equation}
Likewise, if we denote, for each $\xi \in \partial \widetilde{\Omega}$,
\[
V^\pm := \{ \bv \in V \, : \, \pm \bv \cdot \hat{\nu}(\xi) > 0\},
\]
then we may as well define
\[
p_{|\partial \widetilde{\Omega} \times V}(\xi,\tau,\bv) =: \begin{cases}
p_{| {\Gamma_+}}(\xi, \tau,\bv), & \text{if} \, \bv \in V^+,\\
p_{| {\Gamma_-}}(\xi, \tau,\bv), & \text{if} \, \bv \in V^-.
\end{cases}
\]

In general, boundary conditions for solutions to transport equations have the form
\[
p_{|\Gamma_-} (\xi,\tau,\bv) = (\mathcal{B}p_{| {\Gamma_+}})(\xi, \bv,\tau), \quad (\xi,\bv) \in \Gamma_-, \, 
\tau > 0,
\]
expressing that the incoming flux of cells, $p_{|\Gamma_-}$, is related to the outgoing one, $p_{|\Gamma_+}$, 
through a linear bounded operator $\mathcal{B} : L^2_+ \to L^2_-$. 

Although making precise the form of the boundary operator $\mathcal{B}$ is necessary to solve the transport 
equation, for our purposes we only require that it satisfies the following \emph{no normal mass flux 
across the boundary} condition (see \cite{LeMi08}, section 2.4),
\begin{equation}
\label{nonormalflux}
\int_V p(\xi,\tau,\bv) \, (\bv \cdot \hat{\nu}(\xi)) \, d\bv = 0, \quad \text{for all } \; \xi \in \partial 
\widetilde{\Omega}, \; \tau > 0.
\end{equation}
Here $p = p(\xi,\tau,\bv) = p_{| {\Gamma_+}}(\xi, 
\tau,\bv) + (\mathcal{B}p_{| {\Gamma_+}})(\xi, \tau,\bv)$ denotes the trace of the solution to the transport 
equation $p = p(\xi,\tau,\bv)$ on $\partial \widetilde{\Omega} \times V$ for each fixed $\tau > 0$. Condition 
\eqref{nonormalflux} simply expresses that no agents (cells or particles) move across the boundary. 

An important class of no normal mass flux kinetic boundary conditions are the regular reflection boundary 
operators defined by Palczewski \cite{Plcz92} (see also \cite{Lods05}).

\begin{definition}[Palczewski \cite{Plcz92}]\label{defreflbc}
$\mathcal{B} \in \mathscr{L}(L_+^2, L_-^2)$ is a \emph{regular reflection boundary operator} if there exists 
a 
$C^1$-piecewise mapping $\mathcal{V} : \Gamma_- \to \R^n$ satisfying:
\begin{itemize}
\item[(a)] If $(\xi,\bv) \in \Gamma_-$ then $(\xi, \mathcal{V}(\xi,\bv)) \in \Gamma_+$.
\item[(b)] $(\mathcal{B}p)(\xi, \bv) = p(\xi, \mathcal{V}(\xi,\bv))$, for all $(\xi,\bv) \in \Gamma_-$, $p 
\in 
L^2_+$.
\item[(c)] $| \mathcal{V}(\xi,\bv)| = |\bv|$ for any $(\xi,\bv) \in \Gamma_-$.
\item[(d)] $|\hat{\nu}(\xi) \cdot \bv| = |\hat{\nu}(\xi) \cdot \mathcal{V}(\xi,\bv)| |\det \partial 
\mathcal{V}/\partial \bv|$, for all $(\xi,\bv) \in \Gamma_-$.
\item[(e)] $\mathcal{V}(\xi,\beta \bv) = \beta \mathcal{V}(\xi,\bv)$ for any $\beta > 0$, $(\xi,\bv) \in 
\Gamma_-$.
\end{itemize}
\end{definition}
\begin{remark}
As customary examples of regular reflection boundary operators in kinetic theory we have the 
\emph{bounce-back 
reflection condition},
\[
\mathcal{V}(\xi,\bv) = - \bv, \qquad (\xi,\bv) \in \Gamma_-,
\]
which states that an agent (particle or cell) hits the boundary and bounces back with the same velocity but 
with opposite direction, and the \emph{specular reflection boundary condition},
\[
\mathcal{V}(\xi,\bv) = \bv - 2(\bv\cdot \hat{\nu}(\xi)) \, \hat{\nu}(\xi), \qquad (\xi,\bv) \in \Gamma_-,
\]
which expresses that the boundary acts like a mirror and the agent is reflected making the same angle with 
respect to the tangent to the boundary. In both cases the speed $|\bv|$ is preserved (see Figure \ref{figbc}).
\end{remark}
\begin{figure}[h]
\begin{center}
\subfigure[Specular reflection]{\label{figbca}\includegraphics[scale=.175, clip=true]{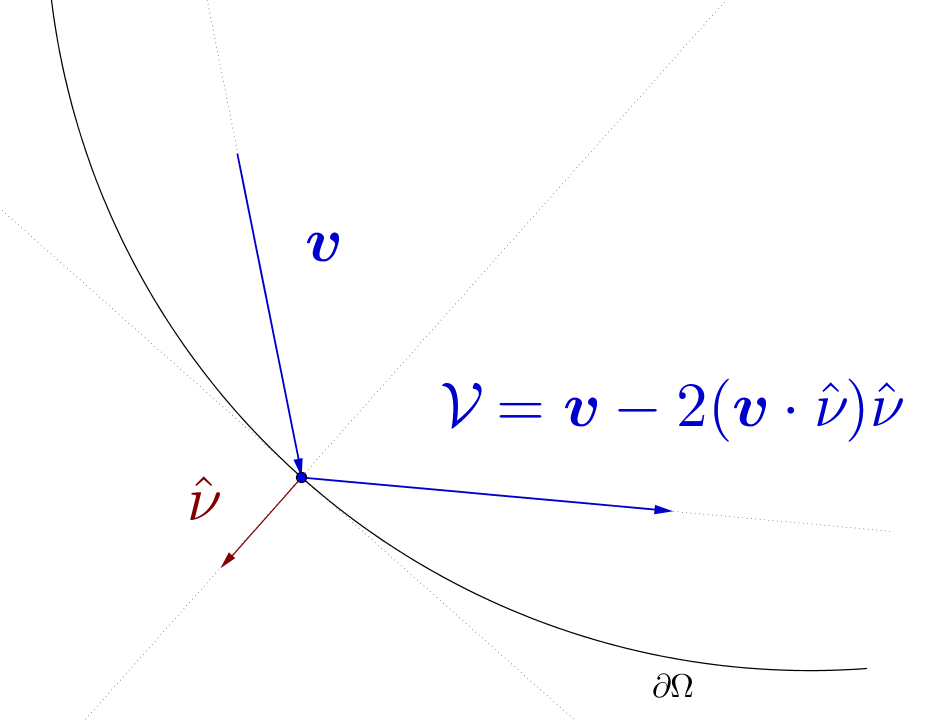}}
\subfigure[Bounce-back reflection]{\label{figbcb}\includegraphics[scale=.175, clip=true]{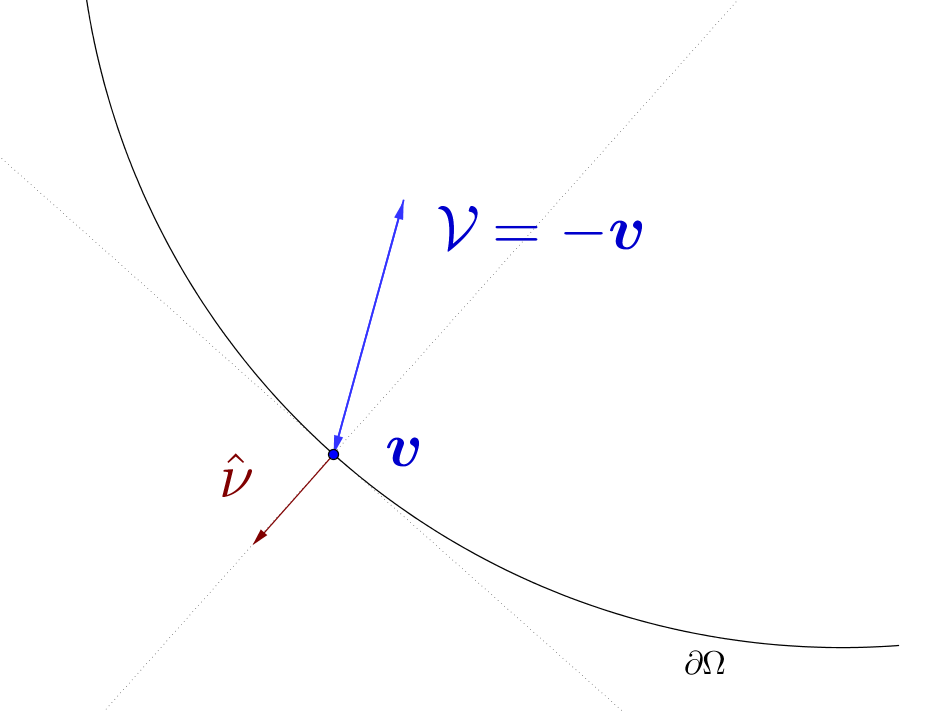}}
\end{center}
\caption{Typical examples of regular reflection boundary operators. In the specular reflection case (figure 
\ref{figbca}), the agent is reflected making the same angle with respect to the tangent to the boundary; in 
the case of the bounce-back operator (figure \ref{figbcb}) the agent is reflected exactly in the opposite 
direction. In both cases the speed is preserved (color plot online).}\label{figbc}
\end{figure}
\begin{lemma}
\label{lemreflec}
Every regular reflection boundary operator satisfies the no normal mass flux across the boundary condition 
\eqref{nonormalflux}.
\end{lemma}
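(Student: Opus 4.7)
The plan is to split the full velocity integral in \eqref{nonormalflux} into its outgoing and incoming pieces over $V^+$ and $V^-$, rewrite the incoming piece using the boundary operator $\mathcal{B}$, and then perform the change of variables $\bv \mapsto \bv' = \mathcal{V}(\xi,\bv)$ to map it onto the outgoing piece with the opposite sign.

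Fix $\xi \in \partial\widetilde{\Omega}$ and $\tau > 0$. Since $\Gamma_0$ has measure zero, I would start by decomposing
\[
\int_V p(\xi,\tau,\bv)\,(\bv \cdot \hat{\nu}(\xi))\,d\bv
=\int_{V^+} p_{|\Gamma_+}(\xi,\tau,\bv)\,(\bv\cdot\hat\nu)\,d\bv
+\int_{V^-} p_{|\Gamma_-}(\xi,\tau,\bv)\,(\bv\cdot\hat\nu)\,d\bv.
\]
On $V^-$, the boundary condition together with property (b) of Definition~\ref{defreflbc} gives
$p_{|\Gamma_-}(\xi,\tau,\bv)=(\mathcal{B}p_{|\Gamma_+})(\xi,\bv,\tau) = p_{|\Gamma_+}(\xi,\tau,\mathcal{V}(\xi,\bv))$.
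Since $\bv\cdot\hat\nu < 0$ on $V^-$, the incoming contribution becomes
\[
-\int_{V^-} p_{|\Gamma_+}(\xi,\tau,\mathcal{V}(\xi,\bv))\,|\bv\cdot\hat\nu(\xi)|\,d\bv.
\]

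Next, I would apply the change of variables $\bv' = \mathcal{V}(\xi,\bv)$. Property (a) ensures $\mathcal{V}$ maps $V^-$ into $V^+$; combined with (c) and the homogeneity in (e), together with the non-degeneracy of $|\det \partial\mathcal{V}/\partial\bv|$ implicit in property (d), this map is a bijection (up to a null set) between $V^-$ and $V^+$. Property (d) provides the crucial cancellation: $|\bv\cdot\hat\nu(\xi)|\,d\bv = |\mathcal{V}(\xi,\bv)\cdot\hat\nu(\xi)|\,|\det\partial\mathcal{V}/\partial\bv|^{-1}\cdot|\det\partial\mathcal{V}/\partial\bv|\,d\bv' \cdot|\det\partial\mathcal{V}/\partial\bv|^{-1}$; more cleanly, from (d) rewritten as $|\bv\cdot\hat\nu|\,d\bv = |\bv'\cdot\hat\nu|\,d\bv'$, the incoming contribution equals
\[
-\int_{V^+} p_{|\Gamma_+}(\xi,\tau,\bv')\,|\bv'\cdot\hat\nu(\xi)|\,d\bv'
=-\int_{V^+} p_{|\Gamma_+}(\xi,\tau,\bv')\,(\bv'\cdot\hat\nu(\xi))\,d\bv',
\]
using that $\bv'\cdot\hat\nu>0$ on $V^+$. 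This exactly cancels the outgoing contribution, establishing \eqref{nonormalflux}.

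The main technical point, and the step I would be most careful about, is the measure-theoretic change of variables: I need the piecewise $C^1$ map $\mathcal{V}$ to be a bijection (up to $d\gamma_\xi d\bv$-null sets) from $V^-$ onto $V^+$ so that the substitution is valid. This follows by combining (a), (c), and (e) to exclude degeneration on a set of positive measure, and then (d), which forces $|\det\partial\mathcal{V}/\partial\bv|$ to be nonzero wherever $\bv\cdot\hat\nu\neq 0$, ensuring local invertibility of $\mathcal{V}(\xi,\cdot)$ on $V^-$. Once this is in place, the remainder of the argument is a bookkeeping of signs and absolute values coming directly from property (d), and the identity \eqref{nonormalflux} drops out without any appeal to the specific form of $p$.
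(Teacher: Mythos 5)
Your proposal is correct and follows essentially the same route as the paper's proof: split the flux integral over $V^+$ and $V^-$, rewrite the incoming part via properties (a), (b) of Definition \ref{defreflec} and the boundary condition, and use (d) as the Jacobian identity $|\bv\cdot\hat\nu|\,d\bv = |\mathcal{V}(\xi,\bv)\cdot\hat\nu|\,|\det\partial\mathcal{V}/\partial\bv|\,d\bv$ to change variables and cancel the outgoing part. Your added care about the (essential) bijectivity of $\mathcal{V}(\xi,\cdot)$ from $V^-$ onto $V^+$ is a reasonable refinement of a point the paper leaves implicit, but the argument is the same.
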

\begin{proof}
 Follows directly from Definition \ref{defreflbc}; indeed, use (a), (b) and (d) to obtain
\[
\begin{aligned}
\int_V &p(\xi,\bv,\tau)(\bv \cdot \hat{\nu}(\xi)) \, d\bv = \!\!\int_{V^+} \!p_{|\Gamma_+} (\xi,\tau,\bv) 
\, 
(\bv \cdot \hat{\nu}(\xi)) \, d\bv + \!\!\int_{V^-} \!p_{|\Gamma_-} (\xi,\tau,\bv) \, (\bv \cdot 
\hat{\nu}(\xi)) \, 
d\bv \\
&= \int_{V^+} p_{|\Gamma_+} (\xi,\tau,\bv) \, (\bv \cdot \hat{\nu}(\xi)) \, d\bv - \int_{V^-} (\mathcal{B} 
p_{|\Gamma_+})(\xi,\tau,\bv) \, |\bv \cdot \hat{\nu}(\xi)| \, d\bv \\
&= \int_{V^+} \!p_{|\Gamma_+} (\xi,\tau,\bv)(\bv \cdot \hat{\nu}(\xi)) \, d\bv - \!\! \int_{V^-}\! p(\xi, 
\tau, 
\mathcal{V}(\xi,\bv)) \, |\hat{\nu}(\xi) \cdot \mathcal{V}(\xi,\bv)| |\det \partial \mathcal{V}/\partial \bv| 
\, d\bv \\
&= \int_{V^+}\! p_{|\Gamma_+} (\xi,\tau,\bv) \, (\bv \cdot \hat{\nu}(\xi)) \, d\bv - \int_{V^+} \!
p_{|\Gamma_+}(\xi, \tau, \bw) (\bw \cdot \hat{\nu}(\xi)) \, d \bw   \\
&= 0,
\end{aligned}
\]
for each $\xi \in \partial \widetilde{\Omega}$, $\tau > 0$, as claimed.
\end{proof}

\begin{remark}
The no normal mass flux across the boundary condition \eqref{nonormalflux} might as well be 
satisfied by other boundary conditions that could include, for instance, boundary operators of Maxwell type, 
in which reflection and diffusion effects are combined, even containing nonlocal terms (see 
\cite{BePr87,Lods05} for further information).
\end{remark}

In the sequel, we assume that the solution $p$ to the transport equation is smooth enough so that the trace 
can be computed via an interior limit (equation \eqref{tracecond}) and that the general kinetic boundary 
condition satisfies \eqref{nonormalflux}, regardless of its form. Since we are assuming that a regular 
Hilbert expansion is valid in the open domain $\widetilde{\Omega}$, we can compute the trace taking the limit 
to the boundary on  \eqref{hilexp}. In that case, the no normal mass flux across the boundary condition 
\eqref{nonormalflux} reads
\[
\int_V \big( p_0(\xi,\tau,\bv) + \epsilon p_1(\xi,\tau,\bv) + O(\epsilon^2) \big) (\bv \cdot \hat{\nu}(\xi)) 
\, d\bv = 0, \qquad \xi \in \partial \widetilde{\Omega}, \, \tau > 0.
\]
In view that this condition is independent of $\epsilon > 0$, we may assume that
\[
\int_V p_j(\xi,\tau,\bv) (\bv \cdot \hat{\nu}(\xi)) \, d\bv = 0,
\]
for all $j \geq 0$, $\xi \in \partial \widetilde{\Omega}$, $\tau > 0$. The condition for $j=0$ is trivially 
satisfied,
\[
\int_V p_0(\xi,\tau,\bv) (\bv \cdot \hat{\nu}(\xi)) \, d\bv = \bpz(\xi,\tau) \Big( \int_V \bv \, d\bv \Big) 
\cdot \hat{\nu}(\xi) = 0,
\]
inasmuch as $p_0$ is independent of $\bv$ and $V$ is symmetric. The condition for $j = 1$ will provide the 
boundary conditions for the leading term $\bpz = \bpz(\xi,\tau)$. The latter reads
\[
\int_V p_1(\xi,\tau,\bv) (\bv \cdot \hat{\nu}(\xi)) \, d\bv = 0.
\]
Substituting \eqref{simpp11} one arrives at
\begin{equation}
\label{approxbe}
\int_V  \Big( \cF_0 \big( \bv \cdot \nabla_\xi \bpz(\xi,\tau) \big) + \cF_0 \big( (\lambda_1(\bv,\hat{S}) - 
\blam_1(\bv,\hat{S})) \bpz(\xi,\tau) \big) \Big) (\bv \cdot \hat{\nu}(\xi)) \, d\bv = 0.
\end{equation}
Using the identity,
\[
\cF_0\big( \bv \cdot \nabla_\xi \bpz \big) (\bv \cdot \hat{\nu}(\xi)) = \Big( \bv (\cF_0 \bv)^\top \nabla_\xi 
\bpz \Big) \cdot \hat{\nu}(\xi)
\]
(which can be easily verified), we find that
\[
\begin{aligned}
\int_V \cF_0 \big( \big( \bv \cdot \nabla_\xi \bpz(\xi,\tau) \big) \big) (\bv \cdot \hat{\nu}(\xi)) \, d\bv 
&= 
\int_V \Big( \bv (\cF_0 \bv)^\top \nabla_\xi \bpz(\xi,\tau)  \Big) \cdot \hat{\nu}(\xi) \, d\bv \\&= \left[ 
\Big( \int_V \bv (\cF_0 \bv)^\top \, d\bv \Big) \nabla_\xi \bpz(\xi,\tau) \right] \cdot \hat{\nu}(\xi) \\
&= - |V| \big( \bD \nabla_\xi \bpz (\xi,\tau) \big) \cdot \hat{\nu}(\xi).
\end{aligned}
\]

On the other hand, we observe, since $\bpz$ is independent of $\bv$, that
\[
\begin{aligned}
\int_V  \cF_0 \big( (\lambda_1(\bv,\hat{S}) - \blam_1(\bv,\hat{S}))& \bpz(\xi,\tau) \big) (\bv \cdot 
\hat{\nu}(\xi)) \, d\bv = \\ &=\Big( \int_V \cF_0 (\lambda_1(\bv,\hat{S}) - \blam_1(\bv,\hat{S})) \, \bv \, 
d\bv 
\Big) \cdot \hat{\nu}(\xi) \bpz(\xi,\tau) \\
&= |V| (\bw_c \cdot \hat{\nu}(\xi)) \bpz(\xi,\tau).
\end{aligned}
\]
Therefore the boundary condition \eqref{approxbe} can be written as
\[
|V| \Big( \bD \nabla_\xi \bpz(\xi,\tau) - \bpz(\xi,\tau) \bw_c \Big) \cdot \hat{\nu}(\xi) = 0,
\]
or, equivalently,
\begin{equation}
\label{bcondi}
\Big( \bD \nabla_\xi \rho(\xi,\tau) - \rho(\xi,\tau) \bw_c \Big) \cdot \hat{\nu}(\xi) = 0, 
\end{equation}
for each $\xi \in \partial \widetilde{\Omega}$, $\tau > 0$.

We summarize these observations as follows.
\begin{proposition}[no flux boundary conditions in the parabolic limit]\label{propbc}
Under the assumptions of Proposition \ref{propint}, let us suppose that $\partial \Omega$ is piecewise 
smooth, that the regular Hilbert expansion \eqref{hilexp} of the solution to the transport equation 
\eqref{svjp} is smooth enough, and that the kinetic boundary conditions imposed on $p$ satisfy the no normal 
mass flux across the boundary condition \eqref{nonormalflux}. Then the leading term $\rho(\xi,\tau) = |V| 
\bpz(\xi,\tau)$ of the Hilbert expansion satisfies the following no flux boundary condition at $\partial 
\widetilde{\Omega}$,
\[
\Big( \bD \nabla_\xi \rho(\xi,\tau) - \rho(\xi,\tau) \bw_c \Big) \cdot \hat{\nu}(\xi) = 0, \qquad \xi \in 
\partial \widetilde{\Omega}, \; \tau > 0,
\]
whereupon the diffusion tensor $\bD$ and the chemotactic velocity $\bw_c$ are defined as in Proposition 
\ref{propint}.
\end{proposition}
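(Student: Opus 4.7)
The plan is to exploit the no-normal-mass-flux boundary condition \eqref{nonormalflux} order by order in the Hilbert expansion \eqref{hilexp}. Since the expansion is assumed valid up to the boundary via the trace identity \eqref{tracecond}, substituting it into \eqref{nonormalflux} produces a power series in $\epsilon$ that must vanish identically. Because \eqref{nonormalflux} holds independently of $\epsilon$, each coefficient $\int_V p_j(\xi,\tau,\bv)(\bv \cdot \hat{\nu}(\xi))\,d\bv$ must vanish for every $j \geq 0$.

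The $j=0$ coefficient vanishes trivially: from Proposition \ref{propint} we know that $p_0(\xi,\tau,\bv) = \bpz(\xi,\tau)$ is independent of $\bv$, so it factors out and the remaining integral $\int_V \bv \, d\bv$ is zero by the symmetry of $V$ about the origin. The boundary condition for the leading term must therefore come from the $j=1$ coefficient. Here I would substitute the explicit expression \eqref{simpp11}, writing $p_1$ as the sum of $\cF_0(\bv \cdot \nabla_\xi \bpz)$ and $\cF_0\big((\lambda_1(\bv,\hat{S}) - \blam_1(\bv,\hat{S}))\bpz\big)$, and compute the integral of each piece against $\bv \cdot \hat{\nu}(\xi)$.

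For the first piece I would use the pointwise identity $\cF_0(\bv \cdot \nabla_\xi \bpz)(\bv \cdot \hat{\nu}(\xi)) = \big[\bv(\cF_0\bv)^\top \nabla_\xi \bpz\big] \cdot \hat{\nu}(\xi)$, pull the (constant in $\bv$) gradient outside the integral, and recognize the definition \eqref{defD} of the diffusion tensor to obtain $-|V|(\bD \nabla_\xi \bpz) \cdot \hat{\nu}(\xi)$. For the second piece, since $\bpz$ is independent of $\bv$ it factors out of $\cF_0$ and of the integral, leaving an expression that matches the definition \eqref{chemvel} of the chemotactic velocity and produces $|V|(\bw_c \cdot \hat{\nu}(\xi))\bpz$. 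Summing these two contributions gives $|V|\big(\bD \nabla_\xi \bpz - \bpz \bw_c\big) \cdot \hat{\nu}(\xi) = 0$; dividing by $|V|$ and rewriting in terms of $\rho = |V|\bpz$ yields the claimed flux-free boundary condition.

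The chief conceptual point, rather than a computational obstacle, is justifying that the boundary condition can be decomposed order by order in $\epsilon$; this is handled by requiring the Hilbert expansion to be regular and uniform enough near $\partial \widetilde{\Omega}$ for the trace and the expansion to commute, which is exactly the standing smoothness hypothesis of the proposition. A secondary check is that $\cF_0$ is only defined on $\langle 1 \rangle^\perp \subset L^2(V)$, so the arguments $\bv \cdot \nabla_\xi \bpz$ and $(\lambda_1 - \blam_1)\bpz$ must lie in that subspace for each fixed $(\xi,\tau)$: the former by symmetry of $V$, the latter by integrating in $\bv$ and invoking assumption \eqref{T1}. With these checks in place the computation is essentially a boundary-localized version of the interior calculation carried out in Section \ref{secipl}, and the entire argument is independent of the particular form of the boundary operator $\mathcal{B}$, requiring only that it satisfies \eqref{nonormalflux}.
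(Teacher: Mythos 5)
Your proposal is correct and follows essentially the same route as the paper: substitute the Hilbert expansion into the no normal mass flux condition \eqref{nonormalflux}, argue that each order in $\epsilon$ must vanish, dispose of the $j=0$ term by symmetry of $V$, and extract the flux condition from the $j=1$ term using \eqref{simpp11}, the tensor identity for $\cF_0(\bv\cdot\nabla_\xi\bpz)(\bv\cdot\hat{\nu})$, and the definitions \eqref{defD} and \eqref{chemvel}. Your additional checks that the arguments of $\cF_0$ lie in $\langle 1\rangle^\perp$ are consistent with the paper's solvability discussion and do not alter the argument.
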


\section{Modelling density-dependent cross-diffusion}
\label{secmodelling}

We now turn our attention to specific choices of the turning kernel and the turning frequency that lead to 
the 
special form of equations \eqref{fullsyst}. In the case of the turning kernel, we simply suppose that it is 
unperturbed from a 
normalized turning kernel $T_0$ which is a multiple of the identity,
\begin{equation}
\label{defT0}
T = T_0(\bv,\bv') := \frac{1}{|V|},
\end{equation}
corresponding to a uniform reorientation in velocity space. In this case, the turning kernel is unbiased and 
it clearly satisfies assumptions \eqref{T1} - \eqref{T4}. We also define the set $V$ of admissible velocities 
to be
\begin{equation}
\label{defV}
V := s \mathbb{S}^{n-1} := \{ s\bv \, : \, |\bv| = 1, \bv \in \mathbb{R}^ n \},
\end{equation}
for a certain constant speed $s > 0$. This choice is tantamount to a uniform distribution of velocities in 
all directions with constant magnitude $s > 0$. Clearly, $V$ is compact and symmetric with respect to the 
origin.

\subsection{Choice of the perturbed turning rate}

Regarding the turning frequency, we follow a suggestion by Othmer \textit{et al.} \cite{ODA88}. They observed 
that dispersal or aggregation can be modelled if one assumes that $\lambda$ depends explicitly on the 
marginal density $\rho$ (see also \cite{MCPF12}). Aggregation effects, for instance, can be described if 
$\lambda$ is a decreasing function of $\rho$, in which case the waiting time between jumps increases with the 
density. Likewise, $\lambda$ may also depend on the chemical concentration and its gradient. Experiments 
show, for instance, that the turning time is low when the concentration of the chemoattractant is low (see, 
e.g., \cite{MRFSASS17} in the case of \textit{B. subtilis} and oxygen as chemoattractant\footnote{we are 
unaware of similar experimental results for \textit{E. coli}, for example, or for any other chemoattractant, 
such as peptone (nutrient); still, we conjecture that this assumption is reasonable within certain regimes of 
concentration.}). In view of the above observations, we propose the following space- and time-dependent 
unperturbed turning rate in \eqref{ptr},
\[
\lambda_0 (\xi,\tau) := \frac{\mu_0}{\rho(\xi,\tau) S(\xi,\tau)},
\]
where $\mu_0 > 0$ is a constant with the appropriate units: for $\lambda_0$ to have frequency units 
($1/[T]$), $\mu_0$ must have units of square moles over time ($[M]^2/[T]$), and it can be regarded as a 
turning frequency per square unit mass. To encode the dependence of $\lambda$ on the 
gradient of the chemical signal, we follow \cite{OtHi02} and set the $O(\epsilon)$-perturbation of the 
turning rate to be
\begin{equation}
\label{lambdaone}
\lambda_1 (\bv, \hat{S}) := \kappa(S) (\bv \cdot \nabla_\xi S),
\end{equation}
where $\kappa = \kappa(S)$ is a scalar function to be chosen (and interpreted) later. In this fashion, the 
turning rate takes the following form,
\begin{equation}
\label{fullam}
\lambda(\bv, \hat{S}) = \lambda_0(\xi,\tau) + \ep \lambda_1(\bv,\hat{S}) = 
\frac{\mu_0}{\rho(\xi,\tau) S(\xi,\tau)} + \ep \kappa(S(\xi,\tau)) \, (\bv \cdot \nabla_\xi S(\xi,\tau) ).
\end{equation}
\begin{remark}
The turning frequency considered here is of Schnitzer type \cite{Schn93}, $\lambda = \lambda_0 + \ep 
\lambda_1(\bv)$, in which the order $O(\ep)$ perturbation depends on the velocity, whereas $\lambda_0$ does 
not. $\lambda_0$ and $\lambda_1$, however, may depend on space and time (see also the Discussion section in 
\cite{OtHi02}). We assume that this dependence happens through the marginal population density $\rho$ and the 
chemical concentration density $S$. It is to be observed that our choice for such dependence, namely 
$\lambda_0 \sim \rho^{-1} S^{-1}$, is motivated by experimental observations which indicate that the running 
time is low when the chemical concentration is low (cf. \cite{MRFSASS17}), and that $\lambda$ must be a 
decreasing function of $\rho$ in order to model aggregation \cite{MCPF12,ODA88}. Moreover, notice that the 
dependence of $\lambda$ on space and time occurs at the lowest order in $\epsilon$, affecting the form of the 
resulting chemotactic velocity. 
\end{remark}

If the turning kernel $T = T_0$ is defined by \eqref{defT0} then the pseudo-inverse, $\mathcal{F}_0 \, : \, 
\langle 1 \rangle^\perp \subset L^2(V) \to L^2(V)$, of the operator $\mathcal{L}_0$ defined in \eqref{defL0} 
reduces to multiplication by $- \lambda_0^{-1}$ (see \cite{HiOt00}). Therefore, the expression for the 
diffusion tensor \eqref{defD} reduces to
\[
\bD = - \frac{1}{|V|} \int_V \bv \otimes (\cF_0 \bv)  \, d\bv = \frac{\lambda_0^{-1}}{|V|} \int_V \bv \otimes 
\bv 
\, d\bv.
\]
Since $|V| = s^{n-1} |\mathbb{S}^{n-1}| =: s^{n-1} \omega_n$ and 
\[
\int_V \bv \otimes \bv \, d\bv = \int_{\mathbb{S}^{n-1}} s^2 \boldsymbol{\eta} \otimes \boldsymbol{\eta} 
s^{n-1} \, dS_{\boldsymbol{\eta}} = s^{n+1} \frac{\omega_n}{n} \mathbb{I}_n,
\]
then the (cross) diffusion tensor takes the form $\bD = (\lambda_0 s^2 / n) \mathbb{I}_n$, that is,
\begin{equation}
\label{crossD}
\bD = \bD (\rho,S) = \left(\frac{s^2}{\mu_0 n} \right) \rho S \, \mathbb{I}_n.
\end{equation}

In order to compute the chemotactic velocity \eqref{chemvel}, first we observe that the average bias vanishes,
\[
\begin{aligned}
\overline{\lambda_1}(\bv) = \int_V \lambda_1(\bv') T_0(\bv, \bv') \, d\bv' &= \frac{1}{|V|} \int_V \kappa(S) 
(\bv' \cdot \nabla_\xi S) \, d\bv' \\ &= \frac{\kappa(S)}{|V|} \left( \int_V \bv \, d\bv \right) \cdot 
\nabla_\xi S \\&= 0,
\end{aligned}
\]
inasmuch as $V$ is symmetric with respect to the origin and, consequently, $\int_V \bv \, d \bv = 0$. 
Therefore the chemotactic velocity \eqref{chemvel} reduces to
\[
\bw_c = \frac{1}{|V|} \int_V \bv \cF_0 \big( \lambda_1(\bv,\hat{S})  \big) \, d\bv = - \, 
\frac{\lambda_0^{-1}}{|V|} \int_V \bv \kappa(S) (\bv \cdot \nabla_\xi S) \, d\bv = - \mathbb{X} \nabla_\xi S, 
\]
where
\begin{equation}
\label{chemst}
\mathbb{X} := \frac{\kappa(S)}{\lambda_0 |V|} 
\int_V \bv \otimes \bv \, d\bv,
\end{equation}
is the \textit{chemotactic sensitivity tensor}. In view that $V = s \mathbb{S}^{n-1}$ we readily obtain
\begin{equation}
\label{ourchemst}
\mathbb{X}(\rho,S) = \left( \frac{s^2}{\mu_0 n}\right) \kappa(S) \rho S \,  \mathbb{I}_n.
\end{equation}
Therefore, the chemotactic velocity takes the form
\begin{equation}
\label{chemvel2}
\bw_c =  - \left( \frac{s^2}{\mu_0 n}\right) \kappa(S) \rho S \nabla_\xi S.
\end{equation}

The scalar function $\kappa = \kappa(S)$ can be interpreted as the standard \textit{chemotactic sensitivity 
function}. If $\kappa < 0$ then the chemotactic signal is attractive and the chemical is known as a 
chemo-attractant. If $\kappa > 0$ then the signal is repulsive and the substance is known as a 
chemo-repellent. As expected, the attractive/repellent behaviour of the chemical signal can be modelled 
by the choice of the sign of the function $\kappa$. Notice that, from the form of \eqref{lambdaone} and since 
$\lambda_1$ has frequency units, the function $\kappa$ must have dimensions of $1/[M]$ ($1/$ chemical 
concentration moles).

Upon substitution into the resulting equation \eqref{parabeq} for $\bpz = |V|^{-1} \rho$ we arrive at the 
following parabolic, cross-diffusion equation for the density function $\rho$:
\begin{equation}
\label{finaleq}
\frac{\partial \rho}{\partial \tau} = \nabla_{\xi} \cdot \left( \Big( \frac{s^2}{\mu_0 n} \Big) \, \rho \, S 
\,  
\nabla_\xi \rho \right) + \nabla_{\xi} \cdot \left( \Big( \frac{s^2}{\mu_0 n} \Big) \,  \rho^2 \, S \,  
\kappa(S) \nabla_\xi S \right).
\end{equation}

\begin{remark}
It should be observed that the quantity
\begin{equation}
\label{defcrossd}
D(\rho, S) := \Big( \frac{s^2}{\mu_0 n} \Big) \rho S,
\end{equation}
has dimensions of square length over time ($\,[L]^2/[T]$) and, thus, it may be interpreted as an effective 
diffusion coefficient for the population density $\rho$. Moreover, equation \eqref{finaleq} underlies the 
experimental (yet, phenomenological) observation by Ben-Jacob and his group \cite{GKCB,B-JCoLev00}. Indeed, 
if we define the chemotactic flux as
\[
\boldsymbol{J}_c := - \Big( \frac{s^2}{\mu_0 n} \Big) \kappa(S) \rho^2 S \nabla_\xi S,
\]
then it clearly follows the empirical rule \eqref{empiricalrule}, where the bacterial 
response function $\zeta$ is proportional to the product of $\rho$ with the effective diffusion coefficient,
\[
\zeta(\rho,S) = \Big( \frac{s^2}{\mu_0 n} \Big) \rho^2 S = \rho D(\rho,S).
\]
\end{remark}

Finally, as to the chemotactic sensitivity function is concerned, $\kappa = \kappa(S)$ will be assumed to be 
attractive ($\kappa(S) < 0$) and to follow the Lapidus-Schiller model or ``receptor law'' \cite{LapSch1},
\begin{equation}
\label{rlaw}
\kappa(S) = - \, \frac{\chi_0 K_d}{(K_d+S)^2},
\end{equation}
where $\chi_0>0$ is a dimensionless constant measuring the strength of the chemotaxis, and $K_d > 0$ is the 
receptor-ligand binding dissociation constant, which has nutrient concentration units $[M]$ (it represents 
the nutrient level needed for half receptor to be occupied), has a unique value that depends on the bacterial 
strain and the chemical signal under consideration, and must be determined experimentally. The 
receptor law was designed to fulfill experimental observations (see \cite{MOA}) which indicate that, for 
very high concentrations of the chemical signal, the chemotactic response of bacteria vanishes due to 
saturation of the receptors. We also recall that it has been reported from experiments (cf. Block \textit{et 
al.} \cite{BSB83}) that the turning frequency depends on the rate of change of the fraction $S/(K_d + S)$, 
and that is why the chemotactic sensitivity is encoded in the expression for the turning frequency.

Upon substitution of the receptor law \eqref{rlaw} into \eqref{finaleq} we arrive at the following 
cross-diffusion equation
\begin{equation}
 \label{finalrhoeq}
\frac{\partial \rho}{\partial \tau} = \nabla_{\xi} \cdot \left( \Big( \frac{s^2}{\mu_0 n} \Big) \, \rho \, S 
\,  \nabla_\xi \rho \right) - \nabla_{\xi} \cdot \left( \Big( \frac{s^2}{\mu_0 n} \Big) \,  \rho^2 \, S \,  
\frac{\chi_0 K_d}{(K_d+S)^2} \nabla_\xi S \right).
\end{equation}
Finally, the boundary condition \eqref{bcondi} takes the form
\begin{equation}
\label{finalbc}
\Big( \rho S \nabla_\xi \rho - \rho^2 S \frac{\chi_0 K_d}{(K_d + S)^2}  \nabla_\xi S \Big) \cdot 
\hat{\nu}(\xi) = 0, \qquad 
\xi \in \partial \widetilde{\Omega}, \; \tau > 0.
\end{equation}

\subsection{Equation for the chemical signal, reaction term and non-di\-men\-sio\-na\-li\-za\-tion}

The pure movement velocity jump process expressed by equation \eqref{wvjp} is Markovian, that is, the 
probability that agents reach the position $x$ at time $t$ does not depend on previous times, $t - T$, with 
$T > 0$. In other words, there is no history dependence. In our case, this property holds because the jumps 
are governed by a Poisson process (see, e.g., Feller \cite{Fell3ed}). Hence, it is legitimate to account for 
population growth/production by simply adding a reaction function to the right hand side of equation 
\eqref{wvjp}. Here we adopt the kinetics considered by Kawasaki \emph{et al.} \cite{KMMUS} and by Leyva 
\emph{et al.} \cite{LMP1}, for which the chemical signal is due to nutrient concentration. If 
$\widetilde{\cG}(p,S)$ represents the consumption rate of the nutrient by the bacteria, then the growth rate 
of the former is $\theta \widetilde{\cG}(p,S)$, where $\theta > 0$ is a (dimensionless) conversion constant 
or conversion rate of consumed nutrient to bacterial growth. This consumption rate function or kinetic 
function is chosen to follow the Michaelis-Menten rule,
\[
\widetilde{\cG}(p,S) = \frac{k p S}{1 + \gamma S},
\]
whereupon $k > 0$ is the intrinsic consumption rate and $\gamma > 0$ is a saturation constant ($k/\gamma$ is 
the maximum consumption rate by one single cell.) This function can be approximated by
\begin{equation}
\label{defkineticG}
\widetilde{\cG} (p,S) = k p S,
\end{equation}
for low nutrient concentrations (see \cite{KMMUS}). It measures the probability of encounters between 
bacterial agents with nutrient molecules. 

We now make one further assumption, and suppose that \emph{the number of directional changes outnumber the 
birth events}, implying that the pure movement velocity jump process occurs on a much faster scale than the 
production events due to kinetics (see, e.g., Hillen \cite{Hil03}). Since $t$ is the fast time variable and 
$\tau$ denotes the slow time scale, the pure kinetics are governed by an equation of the form $p_\tau = 
\widetilde{\cG}(p,S)$, that is, $p_t = \epsilon^2 \widetilde{\cG}(p,S)$. Hence we define,
\[
{\cG}(p,S) := \epsilon^2 \widetilde{\cG}(p,S) = \epsilon^2 k p S,
\]
and the resulting scaled velocity jump process equation with reaction term reads
\begin{equation}
\label{Rsvjp}
\begin{aligned}
\ep^2 \frac{\partial}{\partial \tau} p(\xi,\tau,\bv) + \ep \bv \cdot \nabla_\xi p(\xi,\tau,\bv) = &- \lambda 
p(\xi,\tau,\bv) + \int_V \lambda T(\bv,\bv') p(\xi,\tau,\bv') \, d\bv' + \\&+\epsilon^2 k p(\xi,\tau,\bv) 
S(\xi,\tau).
\end{aligned}
\end{equation}

It is to be observed that the kinetic term appears at order $O(\epsilon^2)$. If we perform the Hilbert 
expansion approximation on equation \eqref{Rsvjp} as in section \ref{secipl}, the kinetic term shows up at 
the right hand side of the order $O(\epsilon^2)$-equation \eqref{ordertwo}, in the form $k \theta 
\bpz(\xi,\tau) S(\xi,\tau)$. The solvability condition \eqref{solvcond2} is then modified to read
\[
0 = \int_V \Big[ \frac{\partial}{\partial \tau} \bpz(\xi,\tau) + \cR_1 \big( \cF_0 \big( 
\cR_1(\bpz(\xi,\tau)) 
\big) \big) + k \theta \bpz(\xi,\tau) S(\xi,\tau) \Big] \, d\bv.
\]
Since $\int_V k \theta \bpz(\xi,\tau) S(\xi,\tau) \, d\bv = |V| k \theta \bpz(\xi,\tau) S(\xi,\tau)$, then 
the resulting parabolic limit equa\-tion \eqref{parabeq} now incorporates the kinetic term as follows,
\[
\frac{\partial \bpz}{\partial \tau} = \nabla_{\xi} \cdot \big( \bD \, \nabla_\xi \bpz \big) - \nabla_{\xi} 
\cdot 
\big( \bpz \bw_c \big) + k \theta \bpz S.
\]
Specialized to the choices of the turning frequency and kernel in \eqref{fullam} and \eqref{defT0}, last 
equation leads to the following bacterial density equation with kinetic term,
\begin{equation}
\label{finaleq2}
 \frac{\partial \rho}{\partial \tau} = \nabla_{\xi} \cdot \Big( \sigma  \rho \, 
S \,  \nabla_\xi \rho \Big) -  \, \nabla_{\xi} \cdot \left( \sigma  \rho^2 S  \frac{\chi_0 K_d}{(K_d+S)^2} 
\nabla_\xi 
S \right) + k \theta \rho S,
\end{equation}
where
\begin{equation}
\label{defofsigma}
\sigma := \frac{s^2}{\mu_0 n}.
\end{equation}
\begin{remark}
If the medium is very hard then the typical velocity of agents $s$ is small, and the turning frequency 
per square unit mass $\mu_0$ is large (in soft, non-resistant media the bacteria tend to continue with 
same velocity in the absence of an external chemotactic signal). In this fashion, lower values of $\sigma$ 
indicate harder substrates. 
\end{remark}

In the same fashion, since the boundary condition for $\bpz$ is determined by the $O(\epsilon)$-no mass flux 
condition for $p_1$, the kinetic term plays no role and the boundary condition for $\rho$ is given by 
\eqref{finalbc} (details are left to the reader). Finally, the dynamics of the chemical (or nutrient) 
concentration $S = S(\xi,\tau)$ is governed by a standard reaction-diffusion equation (which is well-known to 
derive as the diffusion limit of a stochastic positional jump process) of the form
\begin{equation}
\label{eqforS}
\frac{\partial S}{\partial \tau} = D_S \Delta_\xi S - k \rho S, \qquad \xi \in \widetilde{\Omega}, \; \tau > 
0,
\end{equation}
and subject to Neumann (no-flux) boundary conditions,
\begin{equation}
\label{Sbc}
\nabla_\xi S \cdot \hat{\nu}(\xi) = 0, \qquad \xi \in \partial \widetilde{\Omega}, \, \tau > 0.
\end{equation}
Here $D_S > 0$ denotes the diffusion constant associated to the nutrient concentration and $- k \rho S$ is 
the consumption rate of the nutrient by the bacteria. The system of equations is endowed with initial 
conditions of the form 
\begin{equation}
\label{icrhoS}
\rho(\xi,0) = \rho_0(\xi), \quad S(\xi, 0) = S_0(\xi), \qquad \xi \in \widetilde{\Omega},
\end{equation}
being $\rho_0, S_0$, known functions. The resulting system is given by equation \eqref{finaleq2} for the 
bacterial density, equation \eqref{eqforS} for the chemical signal, the boundary conditions \eqref{finalbc} 
and \eqref{Sbc}, and the initial conditions \eqref{icrhoS}. It is clear that we have arrived at system of 
equations \eqref{fullsyst} - \eqref{incond} (after relabelling the variables for the bacterial and chemical 
concentration densities).

For analytical and numerical purposes we perform a further non-dimensio\-na\-li\-za\-tion procedure. We apply 
the same transformation as in \cite{LMP1}, and define, with a slight abuse of notation, the non-dimensional 
independent space and time variables,
\[
x := \left( \frac{\theta K_d k}{D_S}\right)^{1/2} \xi, \qquad t := \theta k K_d \tau,
\]
the chemical and agent densities,
\[
v := \frac{S}{K_d}, \qquad u := \frac{\rho}{\theta K_d},
\]
as well as the non-dimensional constants
\[
\sigma_0 := \left( \frac{\theta K_d^2}{D_S}\right) \sigma, \qquad \widetilde{s} := \frac{s}{(\theta k K_d 
D_S)^{1/2}}.
\]
Upon substitution we arrive at a system of equations for the rescaled bacterial density, $u = u(x,t)$, and 
the 
nutrient concentration, $v = v(x,t)$, namely,
\begin{equation}
\label{onondimsyst}
\begin{aligned}
u_t &=  \nabla \cdot ( \sigma_0 uv \nabla u) -  \nabla \cdot \left(\sigma_0 \chi_0  \frac{u^2 
v}{(1+v)^2}\nabla v \right) + uv,\\
v_t &= \Delta v - uv,
\end{aligned}
\qquad x \in \Omega, \; t > 0,
\end{equation}
with boundary conditions,
\begin{equation}
\label{onondimbc}
\begin{aligned}
\left(uv \nabla u - \chi_0  \frac{u^2 v}{(1+v)^2} \nabla v \right) \cdot \hat{\nu} = 0, \\
\nabla v \cdot \hat{\nu} = 0,
\end{aligned}
\qquad x \in \partial \Omega, \; t > 0,
\end{equation}
and initial conditions
\begin{equation}
\label{onondimic}
u(x,0) = u_0(x), \quad v(x,0) = v_0(x), \qquad x \in \Omega,
\end{equation}
where now $\Omega$ denotes the rescaled bounded, open domain in the new non-dimensional variables. 

\begin{remark}
Under the non-dimensionalization procedure considered here, system \eqref{onondimsyst} - \eqref{onondimic} 
contains the minimum number of free physical parameters, which in our case reduce to the chemotactic 
sensitivity $\chi_0 > 0$, a dimensionless constant coefficient measuring the intensity of the chemotactic 
signal, and $\sigma_0 > 0$, a non-dimensional parameter measuring the hardness of the substrate (agar).
\end{remark}

As a final observation, we notice that the chemotactic sensitivity function may be chosen freely, depending on
the species of bacteria and of the chemical under consideration. Even in the case of a single species like 
\emph{B. subtilis} one may consider (besides the Lapidus-Schiller receptor law) the classical 
Keller-Segel model \cite{KeSe1,KeSe2}, the Rivero-Tranquillo-Buettner-Lauffenberger (or RTBL) chemotactic 
flux \cite{RTBL89}, or the recently proposed finite range log-sensing model \cite{MRFSASS17}. They all 
appear indicated as the function $\kappa = \kappa(S)$ in both the density equation \eqref{finalrhoeq} and the 
boundary conditions \eqref{finalbc}. After the same non-dimensionalization procedure the resulting system of 
equations can be expressed more generally in terms of an arbitrary chemotactic sensitivity function $\chi = 
\chi(v)$. Recall that, for the chemotactic flux of the form $\boldsymbol{J}_c = - \zeta(u,v) \chi(v) \nabla 
v$ to be attractive, one requires the chemotactic sensitivity function to be negative, $\chi (\cdot) < 0$, 
provided, of course, that $\zeta (\cdot) > 0$. Thus, one may change its sign in the expressions with no harm. 
A more general result can be expressed in terms of the non-dimensional system as follows.

\begin{theorem}[formal parabolic limit]\label{mainprop}
Let $\Omega \subset \R^n$ be an open, bounded domain, with piecewise smooth boundary $\partial \Omega$. Then 
system of equations 
\begin{equation}
\label{nondimsyst}
\begin{aligned}
u_t &= \sigma_0 \nabla \cdot \left( uv \nabla u - u^2 v \chi(v) \nabla v \right) + uv,\\
v_t &= \Delta v - uv,
\end{aligned}
\qquad x \in \Omega, \; t > 0,
\end{equation}
together with boundary and initial conditions of the form
\begin{equation}
\label{nondimbc}
\begin{aligned}
\big(uv \nabla u -   u^2 v \chi(v)\nabla v\big) \cdot \hat{\nu} = 0, \\
\nabla v \cdot \hat{\nu} = 0,
\end{aligned}
\qquad x \in \partial \Omega, \; t > 0,
\end{equation}
and
\begin{equation}
\label{nondimic}
u(x,0) = u_0(x), \quad v(x,0) = v_0(x), \qquad x \in \Omega,
\end{equation}
respectively, constitute the formal parabolic limit when $\epsilon \to 0^+$ of the solutions to a velocity 
jump process governed by a transport equation for a non-dimensional agent distribution $q = q(x,t,\bv)$, of 
the form
\begin{equation}
\label{nondimtranseq}
\epsilon^2 q_t + \epsilon \bv \cdot \nabla q = - \lambda q + \int_{\overline{V}} \lambda T_0(\bv,\bv') q \, 
d\bv' + \epsilon^2 q v,
\end{equation}
and the standard reaction diffusion equation for the (non-dimensional) chemical signal concentration $v = 
v(x,t)$ in \eqref{nondimsyst}, where 
\[
u(x,t) = \int_{\overline{V}} q(x,y,\bv) \, d\bv,
\]
is the zero velocity moment or marginal density. Here 
\[
\overline{V} = \widetilde{s} \, \mathbb{S}^{n-1} := \{ \widetilde{s}\, \bv \, : \, |\bv| = 1, \bv \in 
\mathbb{R}^ n \},
\]
is the uniform set of non-dimensional velocities, the turning kernel and turning frequency admit asymptotic 
expansions of the form
\[
T_0(\bv, \bv') = \frac{1}{|\overline{V}|}, \qquad \lambda = \frac{1}{u(x,t) v(x,t)} - \epsilon \chi(v) (\bv 
\cdot \nabla v),
\]
and the solution $q$ to the transport equation \eqref{nondimtranseq} is further endowed with boundary 
conditions of no normal mass flux type satisfying
\[
\int_{\overline{V}} q(x,t,\bv) (\bv \cdot \hat{\nu}(x)) \, d\bv = 0, \qquad x \in \partial \Omega, \, t>0,
\]
together with standard Neumann boundary conditions for the chemical concentration as in \eqref{nondimbc}. The 
function $\chi = \chi(v)$ is understood as the chemotactic sensitivity function, and it is positive 
(respectively, negative) in the case of a chemoattractant (respectively, chemorepellent).
Finally, $\epsilon > 0$ is a dimensionless small parameter associated to the velocity jump process in the 
parabolic regime. The limiting process is taken in the following sense: if $q$ admits a formal Hilbert 
expansion of the form $q = q_0 + \epsilon q_1 + O(\epsilon^2)$ for $0 < \epsilon \ll 1$ small, then the 
leading order term is independent of the velocity $\bv \in \overline{V}$, it is proportional to the 
marginal density, $q_0 = |\overline{V}| u$, and $u$ and $v$ satisfy system
\eqref{nondimsyst} - \eqref{nondimic}. Moreover, $v_0(x) := v(x,0)$ denotes the initial nutrient 
concentration, 
and
\[
u_0(x) := u(x,0) = \int_{\overline{V}} q(x,0,\bv) \, d\bv,
\]
is the initial distribution of agents for all possible velocities.
\end{theorem}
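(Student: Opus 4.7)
The proof is essentially an assembly of the three main developments already carried out in the excerpt: the interior parabolic limit (Proposition \ref{propint}), the boundary analysis (Proposition \ref{propbc}), and the explicit modelling computations of section \ref{secmodelling}. The plan is to verify that the hypotheses of those results apply to the choices in the statement, carry out the explicit substitutions, incorporate the reaction term at order $O(\epsilon^2)$, and then absorb the non-dimensional coefficients into $\sigma_0$ and $\chi$.

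First I would check that the turning kernel $T_0(\bv,\bv')=1/|\overline{V}|$ on the compact symmetric velocity set $\overline{V}=\widetilde{s}\,\mathbb{S}^{n-1}$ satisfies assumptions \eqref{T1}--\eqref{T4}; this is immediate since $T_0$ is constant and normalized, and the associated operator $\mathcal{L}_0$ has pseudo-inverse equal to multiplication by $-\lambda_0^{-1}$ on $\langle 1\rangle^\perp$, as noted after \eqref{fullam}. With the unperturbed frequency $\lambda_0=1/(uv)$ (independent of $\bv$) and the bounded first-order correction $\lambda_1=-\chi(v)(\bv\cdot\nabla v)$, the structure \eqref{ptr} is in place, so Proposition \ref{propint} applies and yields the interior equation $\partial_\tau\bar{p}_0=\nabla\cdot(\bD\nabla\bar{p}_0-\bar{p}_0\bw_c)$ for the leading Hilbert coefficient.

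Next I would compute $\bD$ and $\bw_c$ explicitly. Since $\int_{\overline{V}}\bv\otimes\bv\,d\bv=\widetilde{s}^{\,n+1}(\omega_n/n)\,\mathbb{I}_n$ and $|\overline{V}|=\widetilde{s}^{\,n-1}\omega_n$, the cross-diffusion tensor reduces, as in \eqref{crossD}, to $\bD=(\widetilde{s}^{\,2}/n)\,uv\,\mathbb{I}_n$; the average bias $\overline{\lambda_1}$ vanishes by symmetry of $\overline{V}$, and a second use of the same moment identity gives $\bw_c=(\widetilde{s}^{\,2}/n)\chi(v)\,uv\,\nabla v$. Substituting into the parabolic limit equation produces the divergence form $u_t=\sigma_0\nabla\cdot(uv\nabla u-u^2v\chi(v)\nabla v)$ with $\sigma_0=\widetilde{s}^{\,2}/n$, which is the bacterial equation of \eqref{nondimsyst} modulo the reaction term.

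To include the kinetic term, I would redo the $O(\epsilon^2)$ step of the Hilbert hierarchy using the perturbed transport equation \eqref{nondimtranseq}: the extra contribution $\epsilon^2 q_0 v=\epsilon^2|\overline{V}|u\,v$ enters the right-hand side of \eqref{ordertwo}, and because it is independent of $\bv$, the Fredholm solvability condition obtained by testing with the constant eigenfunction of $\mathcal{L}_0^*$ simply adds $uv$ to the limiting evolution equation, giving exactly \eqref{nondimsyst}$_1$. The equation \eqref{nondimsyst}$_2$ for $v$ is postulated as a standard reaction-diffusion equation (derived independently as the diffusion limit of a positional jump process, as recalled before \eqref{eqforS}), with consumption $-uv$ matching the growth term up to the conversion factor already absorbed into the non-dimensional scaling.

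For the boundary, I would invoke Proposition \ref{propbc}: the no normal mass flux condition $\int_{\overline{V}} q(x,t,\bv)(\bv\cdot\hat{\nu})\,d\bv=0$ propagates through the Hilbert expansion, trivialises at order $O(1)$ by symmetry, and at order $O(\epsilon)$ produces precisely $(\bD\nabla u-u\bw_c)\cdot\hat{\nu}=0$, which after inserting the explicit $\bD$ and $\bw_c$ becomes the first line of \eqref{nondimbc}; the Neumann condition on $v$ is imposed at the reaction-diffusion level. The initial data $u_0=\int_{\overline{V}}q(x,0,\bv)\,d\bv$ is obtained by taking the zero velocity moment at $t=0$. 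The only mildly delicate bookkeeping step, and the part most likely to trip one up, is checking that the kinetic contribution $\epsilon^2 qv$ does \emph{not} enter at $O(\epsilon)$ and therefore leaves the boundary condition for $\bar{p}_0$ untouched; this follows because the term is a scalar multiple of $q_0$ at leading order, lives in $\ker\mathcal{L}_0$, and thus affects only the evolution equation, not the order-$\epsilon$ solvability that generates the boundary flux.
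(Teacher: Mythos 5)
Your proposal is correct and follows essentially the same route as the paper: it assembles Propositions \ref{propint} and \ref{propbc}, specializes to the uniform kernel $T_0=1/|\overline{V}|$ and the Schnitzer-type rate (using that $\cF_0$ reduces to multiplication by $-\lambda_0^{-1}$ and the moment identity $\int_V\bv\otimes\bv\,d\bv=s^{n+1}(\omega_n/n)\,\mathbb{I}_n$), inserts the kinetic term at order $O(\epsilon^2)$ through the solvability condition, and notes that it leaves the $O(\epsilon)$ boundary relation untouched, exactly as in Section \ref{secmodelling}. The only cosmetic discrepancy is the normalization $q_0=|\overline{V}|u$ versus $u=|\overline{V}|q_0$ (an inconsistency already present in the theorem statement itself), which cancels in the solvability condition and does not affect the limit equations.
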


\section{Discussion}
\label{secdiscuss}

In this paper, a formal macroscopic limit (in the parabolic re\-gi\-me) of a velocity jump 
process in order to arrive at systems of equations \eqref{fullsyst} - \eqref{incond} was performed. The 
system is endowed with a doubly degenerate cross-diffusion term and a nutrient taxis drift term. Both fluxes 
(the diffusive and the chemotactic) are density-dependent, degenerate, and related to each other. The 
derivation makes precise the microscopic interpretation of the phenomenological bacterial response function 
introduced by Ben-Jacob and his group in terms of the turning frequency at the level of the kinetic transport 
equation. The latter is supposed to be perturbed from a turning rate which does not depend on the velocity of 
the cells, but does depend on the chemical concentration. The first order perturbation of the turning 
frequency is assumed to be of Schnitzer type, that is, it depends on the velocity of the cells; in addition, 
it encodes the response to the chemotactic signal as well. The turning kernel corresponds to a uniform 
reorientation in velocity space. These choices for the turning frequency and kernel are compatible with the 
observed frequency of tumbles as a function of time, position and direction, keeping both the run speed and 
the distribution of changes in direction essentially constant. Moreover, the decreasing functional form of 
the unperturbed turning frequency in terms of the chemical concentration obeys recent experimental 
observations. It is also decreasing as a function of the bacterial density in order to model aggregation, as 
suggested by Othmer \emph{et al.} \cite{ODA88}. Its functional form is arbitrary and suitably chosen for our 
needs, but it may be of course modified to fit other observed phenomena, or different responses in other 
parameter regimes. In other words, the procedure presented here can be applied to more general situations.

Up to our knowledge this is the first derivation of a doubly degenerate cross-diffusion model with 
density-dependent (and related to the diffusion) chemotactic drift term from a velocity jump process. 
This derivation is formal, but helpful to understand the interplay of the microscopic description of both 
diffusion and taxis, as well as the main features of the mean field model. From the mathematical viewpoint, 
systems of the form \eqref{fullsyst} pose technical challenges which are of interest to the community working 
on partial differential equations, such as the existence and regularity of global solutions (for recent 
advances, see \cite{DaiDu16a,PlWi17a,Win14b,Win17pre}). Another open mathematical question is the rigorous 
scaling limit, in the spirit of the analysis by Chalub \emph{et al.} \cite{CMPS04,CDMOSS06}, of the solutions 
to the transport equation converging to the weak solutions of the macroscopic diffusion-drift system 
recently obtained in \cite{PlWi17a}. This is an interesting topic for further investigation. Finally, from 
the biological viewpoint, it is of course of interest to explore the consequences of allowing turning 
frequencies to depend on density distributions in order to incorporate more complex spatial and temporal 
dependencies of microscopic quantities into models of related phenomena, such as macromolecular crowding 
\cite{Ellis01}, aggregation \cite{MCPF12}, or microscopic exclusion processes of intracellular transport 
\cite{GFP16}, just to mention a few.

\section*{Acknowledgements}

The author warmly thanks Thomas Hillen and Michael Winkler for enlightening conversations. This research was 
partially supported by DGAPA-UNAM, program PAPIME, grant PE-104116.

\def\cprime{$'$}




\end{document}